\numberwithin{equation}{section}
\newtheorem{theorem}{Theorem}[section]
\newtheorem{lemma}[theorem]{Lemma}
\newtheorem{corollary}[theorem]{Corollary}
\theoremstyle{remark}
\newtheorem{remark}[theorem]{Remark}
\definecolor{darkblue}{rgb}{0,0,0.7}
\newcommand{\interp}{\mathcal{K}}
\newcommand{\al}{\alpha}
\newcommand{\ga}{{\gamma}}
\newcommand{\la}{\lambda}
\newcommand{\Om}{{\Omega}}
\newcommand{\om}{{\omega}}
\newcommand{\tom}{{\tilde \omega}}
\newcommand{\tu}{{\tilde u}}
\newcommand{\tpsi}{{\tilde \psi}}
\newcommand{\td}{\tilde}
\newcommand{\N}{{\mathbb N}}
\newcommand{\bk}{{\mathbf k}}
\newcommand{\nb}{{\nabla}}
\newcommand{\I}{\infty}
\newcommand{\supp}{\mathop{\mathrm{supp}}}
\newcommand{\donothing}[1]{{}}
\newcommand{\EQ}[1]{\begin{equation}\begin{split} #1 \end{split}\end{equation}}
\newcommand{\EQN}[1]{\begin{equation*}\begin{split} #1 \end{split}\end{equation*}}
\newcommand{\xRightarrow}[2][]{\ext@arrow 0359\Rightarrowfill@{#1}{#2}}
\begin{document}
\title[Mobile data assimilation for 2D NS]{Convergence of a mobile data assimilation scheme for the 2D Navier-Stokes equations}
\author{Animikh Biswas, Zachary Bradshaw and Michael Jolly}
\date{\today}
\maketitle 

\begin{abstract}  
We introduce a localized version of the {nudging} data assimilation algorithm for the periodic 2D Navier-Stokes equations in which observations are confined {(i.e., localized)} to a window that moves across the entire domain along a predetermined path at a given {speed}. We prove that, if the {movement is fast} enough, then the algorithm perfectly synchronizes with  a reference solution. The analysis suggests an \textit{informed} scheme in which the subdomain moves according to a region where the error is dominant is optimal.   Numerical simulations are presented that compare the efficacy of movement that follows a regular pattern, one guided by the dominant error, and one that is random. 
\end{abstract}


\section{Introduction}

Data assimilation is concerned with recovering the fine scale activity of a dynamical system via coarse measurements of that system \cite{Daley1991, K}. In applications, exact knowledge of an initial state is often unavailable. However, sensors often continuously monitor activity at a coarse scale. This, for instance, is the case in atmospheric sciences where, since the launch of the first weather satellites in the 1960s, weather data has been collected nearly continuously in time. These measurements provide us with partial knowledge of the state of the system, e.g., the velocity vector field or temperature, on a  coarse spatial grid of points. In data assimilation, forecasting is achieved by supplementing a model with coarse measurements as opposed to a complete initial state.  Many other applications exist including, but not limited to, environmental sciences, systems biology and medicine \cite{Kost2}, imaging science, traffic control and urban planning, economics and finance and oil exploration \cite{ABN}. 
 
In \cite{AOT,AT}, Azouni, Olson and Titi introduced   a nudging based data assimilation algorithm which is mathematically rooted in earlier work on determining functionals \cite{FP,FT,JonesTiti,JonesTiti2}. The concept of nudging is, however, much older and was developed to address geophysical and control problems. An upside of the approach of \cite{AOT}  is its ease of implementation and amenability to rigorous analysis. In contrast, for more traditional approaches to data assimilation, e.g., the Bayesian and variational frameworks, issues of stability, accuracy and {catastrophic filter divergence} persist \cite{HM1, tmk2016-1, tmk2016-2}.
Since \cite{AOT}, the nudging scheme has received a great deal of attention from the fluids community. A partial list of references are  \cite{ANT,BOT,BCM,BM,CHL,FGHMMW,FMT,MTT}.

In the Azouni, Olson and Titi setup, the coarse grid on which data is collected must span the entire domain.  This may be costly or unrealistic in real world settings.  Therefore, it is desirable to develop a data assimilation algorithm which either does not require data to be continuously collected  across the full domain or requires observations across the full domain only at a very coarse scale (in time and space).
In \cite{BBJ1} we demonstrated that, within an error, a localized, stationary collection of observations taken from an \textit{observability region} can approximate a reference flow---i.e., synchronization occurs up to a non-zero error in contrast to \cite{AOT} where synchronization is exact. A numerically visible defect of the method is that the convergence rate of global synchronization, while exponential, is slower than  that of local synchronization on the observability region. The rapid local synchronization slows down global synchronization because the feedback operator is itself local (to the observability region). This means that, when the reference and approximating solutions are locally almost synchronized, the nudging becomes ineffective. If the observability window is moved to a region where the  reference  and approximating solutions are not locally almost synchronized, then the nudging is strengthened, at least until local synchronization occurs in the new region. This intuitive discussion suggests that building mobility into the localized nudging operator will result in improved synchronization compared to the immobile localized setup of \cite{BBJ1}. 
These insights are supported by numerical work on the Navier-Stokes equations \cite{BBJ1,LariosMobile}---see also \cite{LV}. In particular, Franz, Larios and Victor provide a detailed examination of a number of mobile paradigms, the so called ``bleeps, sweeps and creeps'' \cite{LariosMobile}, which outperform the static observer case (with the same number of measurements) in simulations.

In the present paper, we  rigorously show that a certain mobile data assimilation scheme \textit{exactly synchronizes} with a reference solution provided the observers are moving fast enough. We additionally study features of mobile data assimilation numerically.  Our numerical findings are: 1.~ increasing the frequency at which an observability window moves across the domain leads to faster convergence and 2.~choosing the observability window based on an even coarser decision protocol improves convergence compared to a pre-determined movement pattern. We shall refer to this as the informed scheme.  {The main motivations for local data assimilation are that fine-scale measurements may be more expensive to obtain and that collecting fine-scale data may be infeasible in parts of the domain (e.g., in the upper atmosphere or deep in the ocean). In our numerical implementation, the dominant region is identified by coarse-scale measurements only. Thus, the dominant scheme provides the possibility that one may not have to collect fine-scale data from such regions if they remain {\em inactive} until global synchronization has occurred at a high level of accuracy.}

In our simulations, we found that the dominant scheme synchronizes faster than {schemes where the subdomain moves in a regular pattern or moves randomly.} In some cases the motion of the observability window is discontinuous. Physically, this discontinuous movement is consistent with the deployment of different sets of observers consecutively in different regions as   chosen by a decision protocol. A contrived example where discontinuous motion makes sense in weather forecasting is as follows: If the protocol says observations should first be taken over California and, later, over New York, then drones stationed in California could first be deployed and, later, drones from New York could be used. {To be realistic, we also simulate the effect of delay in moving the observation domain}.

\subsection{Notation and preliminaries}

We consider the two-dimensional 
Navier-Stokes equations
(henceforth, 2D NSE) on  $\Om=[-L/2,L/2]^2$ with periodic boundary conditions. For $U\subset \Om$, we use $L^p(U)$ and $H^s(U)$ to denote the Lebesgue spaces and $L^2$ based Sobolev spaces respectively. If $U$ is omitted it is understood that $U=\Om$. We use $\langle \cdot,\cdot\rangle$ to denote the $L^2$ inner product. The Leray projection of $L^2$ onto mean zero divergence free functions in $L^2$ is denoted by  $\mathbb P$. Note that for periodic boundary conditions, $\mathbb P \Delta= \Delta \mathbb P$ and we therefore do not need to introduce the Stokes operator, cf.~\cite[p.~284]{AOT}. We use $|\cdot |$ to denote the absolute value of a vector or scalar and the 1D or 2D Lebesgue measure of a measurable set---the meaning will always be clear based on context. The characteristic function for the set $S$ is denoted $\chi_S$.

We make frequent use of the Poincar\'e and Ladyzhenskaya inequalities which respectively say:
For $u\in H^1(\Om)$, 
\EQ{\label{ineq.lady}
\| u\|_{L^2} \leq \la_1^{-1/2} \|\nb u\|_{L^2} \text{ and }\| u\|_{L^4}\leq C_L \| u\|_{L^2}^{1/2}\| \nb u\|_{L^2}^{1/2}.
}
The prefactors can be viewed as the optimal constants for which these estimates hold ($\la_1$ is the first eigenvalue of the Laplace operator). 

Data assimilation in the spirit of \cite{AOT} uses an interpolant operator to nudge an assimilating solution toward a reference flow. In the present paper, this will be based on a ``type 1'' interpolant $I_h$ defined using volume elements:
\[
I_h f (x)  = \sum_{i=1}^{M^2}\bigg(\chi_{S_i}(x) -  \frac {h^2} {L^2}\bigg)  \frac 1 {h^2} \int_{S_i} f\,dy,
\]
where the periodic domain $\Om$ has been split into $M^2$ identical squares $S_i$ with disjoint boundaries and side lengths $h$. The second term in the difference ensures this operator is mean zero. 
It is is bounded in $L^2$  and  satisfies a Poincar\'e-type inequality \cite{AOT}
\EQ{ 
\| u - I_h(u)\|_{L^2} \leq C h \|\nb u\|_{L^2}.
}
These considerations are independent of periodicity and apply in any square domain. Project onto low Fourier modes is another example of a type 1 interpolant. However, it requires global knowledge of a flow and therefore is inappropriate for out application.

\subsection{The mobile framework}

We now adapt the operator $I_h$ to our mobile setting. 
Fix $N\in \N\setminus \{1\}$. We split $\Om$ into a grid of $N^2$ squares where $N$ is chosen to equal $2^{\td N}$ for some $\td N$---in other words the side length of our partition is taken from a dyadic scale. Each square will correspond to an observability region. We label these sub-regions $\Om_i$, {$i=1,\ldots, N^2$}. Note that $|\Om_i|= \frac {L^2} {N^2}$. Let $\ell = \sqrt{ |\Om_i|} = L/N$. Let $x^i= (x_1^i,x_2^i)$ be the center of $\Om_i$. Each $\Om_i$ represents an ``observability region,'' i.e.~the domain from which data will be collected at a given time.
Note that this partition does \textit{not} correspond to the fine grid on which we will make observations, namely that corresponding to the length scale $h$ in the definition of $I_h$ above.

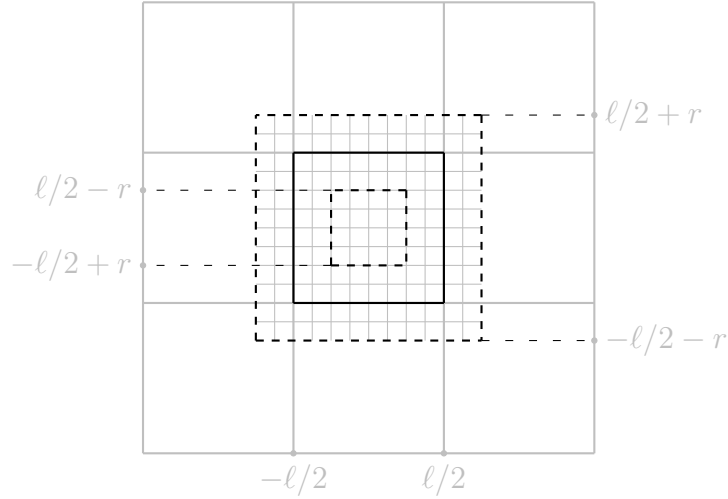
\begin{figure}
\begin{center}
\begin{tikzpicture}
\foreach \i in {0,...,3}
{
        \draw[lightgray, thick] (-3,2*\i-3) -- (3,2*\i-3);
        \draw[lightgray, thick] (2*\i-3,-3) -- (2*\i-3,3);
}
\draw[lightgray] (-1.5, -1.25) -- (1.5, -1.25); 
\draw[lightgray] (-1.5, -1) -- (1.5, -1); 
\draw[lightgray] (-1.5, -.75) -- (1.5, -.75); 
\draw[lightgray] (-1.5, -.5) -- (1.5, -.5); 
\draw[lightgray] (-1.5, -.25) -- (1.5, -.25); 
\draw[lightgray] (-1.5, 0) -- (1.5, 0); 
\draw[lightgray] (-1.5, .25) -- (1.5, .25); 
\draw[lightgray] (-1.5, .5) -- (1.5, .5);
\draw[lightgray] (-1.5, .75) -- (1.5, .75); 
\draw[lightgray] (-1.5, 1) -- (1.5, 1); 
\draw[lightgray] (-1.5, 1.25) -- (1.5, 1.25); 
\draw[lightgray] (-1.25,-1.5) -- (-1.25,1.5);
\draw[lightgray] (-1,-1.5) -- (-1,1.5);
\draw[lightgray] (-.75,-1.5) -- (-.75,1.5);
\draw[lightgray] (-.5,-1.5) -- (-.5,1.5);
\draw[lightgray] (-.25,-1.5) -- (-.25,1.5);
\draw[lightgray] (0,-1.5) -- (0,1.5);
\draw[lightgray] (.25,-1.5) -- (.25,1.5);
\draw[lightgray] (.5,-1.5) -- (.5,1.5);
\draw[lightgray] (.75,-1.5) -- (.75,1.5);
\draw[lightgray] (1,-1.5) -- (1,1.5);
\draw[lightgray] (1.25,-1.5) -- (1.25,1.5);
\draw[black, thick] (-1,-1) -- (-1,1);
\draw[black, thick] (1,1) -- (1,-1);
\draw[black, thick] (1,1) -- (-1,1);
\draw[black, thick] (-1,-1) -- (1,-1);
\draw[dashed, thick] (-1.5,-1.5) -- (-1.5,1.5);
\draw[dashed, thick] (1.5,1.5) -- (1.5,-1.5);
\draw[dashed, thick] (1.5,1.5) -- (-1.5,1.5);
\draw[dashed, thick] (-1.5,-1.5) -- (1.5,-1.5);
\draw[dashed, thick] (-.5,-.5) -- (-.5,.5);
\draw[dashed, thick] (.5,.5) -- (.5,-.5);
\draw[dashed, thick] (.5,.5) -- (-.5,.5);
\draw[dashed, thick] (-.5,-.5) -- (.5,-.5);
\draw[loosely dashed] (1.5,-1.5) -- (3,-1.5);
\draw[loosely dashed] (1.5,1.5) -- (3,1.5);
\draw[loosely dashed] (-.5,-.5) -- (-3,-.5);
\draw[loosely dashed] (-.5,.5) -- (-3,.5);
\filldraw[lightgray] (-1,-3) circle (1pt) node[anchor=north]{$-\ell/2$};
\filldraw[lightgray] (1,-3) circle (1pt) node[anchor=north]{$\ell/2$};
\filldraw[lightgray] (3,-1.5) circle (1pt) node[anchor=west]{$-\ell/2-r$};
\filldraw[lightgray] (3,1.5) circle (1pt) node[anchor=west]{$\ell/2+r$};
\filldraw[lightgray] (-3,.5) circle (1pt) node[anchor=east]{$\ell/2-r$};
\filldraw[lightgray] (-3,-.5) circle (1pt) node[anchor=east]{$-\ell/2+r$};
\end{tikzpicture}
\caption{An illustration of $\Om_i$, which is represented by the solid black square, and adjacent cells. $\Om_i$ is taken to be centered at the origin.   The larger dashed square represents $\td \Om_i$.  The cut-off function
$\phi_i$ is  identically $1$ inside the smaller dashed square and is supported inside $\td \Om_i$. When $\iota (t) = i$, the interpolant $I_{h,t}$ is using observations taken on the fine-scale gray grid; note that observations spill over into a small region outside of $\Om_i$. The lengths of all squares in view are taken along a dyadic scale.
}
\end{center}
\end{figure}

We work with a partition of unity $\{ \phi_i\}_{i=1}^{N^2}$ of $\Om$ so that each $\phi_i$ localizes to $\Om_i$.  The functions $\phi_i$ are chosen to satisfy the following properties (which are illustrated in Figure 1 below):   
\begin{itemize}
    \item Each $\phi_i$ is generated by translating $\phi_1$.
    \item There exists $r < \ell/2$ so that $\phi_1 \equiv 1$ on $[x_1^1-\ell/2+r,x_1^1+\ell/2-r]\times [x_2^1-\ell/2+r,x_2^1+\ell/2-r]$.
    \item $\supp \phi_1 \subset \td \Om_i := [x_1^1-\ell/2-r,x_1^1+\ell/2+r]\times [x_2^1-\ell/2-r,x_2^1+\ell/2+r]$.
    \item $0\leq \phi_1 \leq 1$.
    \item $\sum_i \phi_i  =1$.
    \item We have
    \[
    |\nb \phi_1|\lesssim_{\phi_1} r^{-1} \text{ and }  |\Delta \phi_1|\lesssim_{\phi_1} r^{-2},
    \]
    where the suppressed constants only depend on the function $\phi_1$.
\end{itemize}
 Note the definition of $\td \Om_i$ above as this plays an important role. We fix  $r=\ell/4$ to ensure the boundaries of $\td \Om_i$ line up with dyadic partitions at finer scales.

{We quickly  construct a partition of unity of this form. 
Let $\eta$ be a smooth, radial function with compact support and $\int \eta \,dx= 1$. Let $\eta_\al(x) = \al^{-2} \eta(x/\al)$, which is an approximate identity. Let $\bar \chi_{i}$ be the characteristic function for $\Om_i$. Then $\sum_{i=1}^{N^2} \bar \chi_i = 1$ except on a set of measure zero. Let $\phi_i = \eta_\al * \Om_i$. 
Then
\[
 \sum_{i=1}^{N^2} \phi_i  (x)
 = \int \eta_{\al}(x-y)  \sum_{i=1}^{N^2} \bar \chi_{i}(y)\,dy =1.
\]
The support conditions are satisfied by taking $\al$ small enough.}

We define  a function $\iota(t)$  to be a $\tau_C$-periodic function on $[0,\tau_C)$  which is defined by $\iota (t) = i$ if $t\in [t\in (i-1)/N^2  \tau_C, i/N^2 \tau_C)$, $i=1,\ldots,N^2$.

We  partition each $\Om_i$ into identical squares   of side-length $h$ chosen so that $2^m h=\ell/4$ for some $m\in \N$. Taking the union of these partitions gives a partition of $\Om$ into a grid with sidelength $h$. Note the dyadic  relationship between scales ensures that $\td\Om_i$ is also a union of $h$-length squares which we label $S_j$. For convenience let
\[
\chi_i := \chi_{\td \Om_i}
\]

We define a time-dependent, local interpolant operator based on volume elements as follows: 
\[
 I_{h,t} f :=   \chi_{\td \Om_{\iota(t)}}(x) \sum_{j} \bigg(\chi_{S_j}(x) -  \frac {h^2} {\sqrt {|\td \Om_{\iota (t)}|}}\bigg)  \frac 1 {h^2} \int_{S_j} f(y)\,dy .
\]
Note that $I_{h,t}$ is supported on $\td \Om_{\iota(t)}$. 
Additionally,
\EQ{\label{ineq.L2bounded}
\| I_{h,t} f\|_{L^2} \leq C_I \| f \chi_{\td \Om_{\iota(t)}}\|_{L^2},
}
and  
\EQ{\label{Poincare}
\| u \chi_{\td \Om_{\iota(t)}} - I_{h,t}(u)\|_{L^2} \leq C_I h \|(\nb u)\chi_{\td \Om_{\iota(t)}} \|_{L^2}.
}
The proofs of these estimates are identical to those for the interpolant $I_h$.
Note that the optimal constants in the above estimates are not the same but it is convenient to lump them together under the label $C_I$.

Note that the observation window spends $\tau_C/N^2$ units of time in each of the regions $\Om_i$. Movement from one region to another is discontinuous. It should not be hard to adjust the analytic results in this paper to the case of continuous movement, although the conditions for data assimilation would change. The order that $\Om_i$ are cycled through does not matter.

 For each $\phi_i$ we denote a complimentary cut-off function by
\EQ{\label{defpsi}
\psi_i = \sum_{i\neq j} \phi_j,
}
so that $\psi_i+\phi_i\equiv 1$ on $\Om$.
We say $\Om_i$ is \textit{dominant} if 
\[
\int |w|^2 \phi_i \,dx\geq \frac 1 {N^2} \int |w|^2\,dx
\]
and \textit{active} if 
\[
\int |w|^2 \phi_i \,dx\geq \frac 1 {c_0 N^2} \int |w|^2\,dx,
\]
for some $c_0>1$. This parameter appears in the statement of Theorem \ref{theorem} below---larger values of $c_0$ make the conditions in the theorem more stringent.
Clearly, sine $\chi_i \geq \phi_i$, if $\Om_i$ is active, then we also have  
\[
\int |w|^2 \chi_i \,dx \geq \frac 1 {c_0N^2} \int |w|^2\,dx.
\] 
Alternatively we say $\Om_i^c$ is \textit{codominant} if
\[
\int |w|^2 \psi_i \,dx< \bigg( 1- \frac 1 {N^2}\bigg) \int |w|^2\,dx
\]
and \textit{coactive} if 
\[
\int |w|^2 \psi_i \,dx< \bigg( 1- \frac 1 {c_0N^2} \bigg) \int |w|^2\,dx.
\]
Plainly,  $\Om_i$ is dominant if and only if $\Om_i^c$ is codominant and  $\Om_i$ is active  if and only if $\Om_i^c$ is coactive.

\subsection{Data assimilation}
 
Suppose that $u$ solves the 2D Navier-Stokes equations, written in projected form,
\EQ{\label{eq:NS}
u_t -\nu \Delta u +\mathbb P (u\cdot\nb u) = f; \qquad \nb \cdot u=0,
}
where $\mathbb P$ is the Leray projection operator. It is classical---see, e.g., \cite{cf}---that for large enough times any solution $u$ becomes bounded solely by quantities determined by $\nu$ and $f$. In particular, the estimates  \eqref{inclusions2} stated below hold. Throughout this paper we take this to be true starting from time $t=0$.
Note that since we are on a periodic domain $\Delta \mathbb P = \mathbb P \Delta$.
The nudged equation for $u$ is  
\EQ{ \label{eq:DA}
v_t -\nu \Delta v +\mathbb P  (v\cdot\nb v)= f  -\mu \mathbb P I_{h,t}(v-u); \qquad \nb \cdot v=0.
}
For reasonable choices of $u_0$ and $f$, and certain conditions on $h$ and $\mu$, this system admits a unique global smooth solution---see Remark \ref{remark:Existence}.
The difference $w=u-v$ satisfies
\EQ{\label{eq:difference}
w_t -\nu \Delta w -\mathbb P(w\cdot\nb w - w\cdot \nb u-u\cdot\nb w )=  - \mu \mathbb P I_{h,t}(w ); \qquad \nb \cdot v=0.
}

In the above we will take $f$ to be sufficiently regular. Any solution $u$ is eventually controlled by the Grashof number. We assume that this control holds at all positive times, which can be achieved by initiating our problem at a sufficiently large time for $u$.

\begin{theorem}\label{theorem}
  Let $f\in L^\I(0,\I;L^2)$ be divergence free and mean zero. Let $u$ be a solution to  \eqref{eq:NS} with forcing $f$. Let the Grashoff number be defined by
  \[
  G:= \limsup_{t\to \I}\frac {2} {\nu^2\la_1}  \|f(t)\|_{L^2}.
  \]
  Under these assumptions, \eqref{eq:DA} is well-posed with a unique strong solution.
  Let $v$ be the solution to \eqref{eq:DA} for $u$ and $f$ with initial data identically zero. Let $c_*>0$ and $c_0>1$ be given. 
  If
  \begin{align*}
 \mu  &\geq \max \bigg\{\nu \la_1 G^2 ,c_0 N^2 ( 4\la_1 C_L^4 \nu G^2 +c_* )\bigg\},
\\ h &\leq \min \bigg\{\frac {\nu \la_1^{1/2}} {4C_I\mu} , \sqrt{\frac {\nu} {2C_I^2\mu}} \bigg\},
\\ \tau_C &\lesssim \frac 1 {\mu} e^{- 2C_L^4(G^2+2)},
  \end{align*}
where the suppressed constant only depends  on $c_*$, $c_0$, $\la_1$, $\nu$, $C_L$, $C_I$, $N$, $L$ and $\phi_1$,
then,   
\EQ{
\| (u-v)(t)\|_{L^2}^2 \leq   e^{C_L^4}  \nu^2 G^2 e^{-c_* t/2}.
}    
\end{theorem}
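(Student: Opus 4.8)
The plan is to run a Gronwall-type argument on $y(t):=\|w(t)\|_{L^2}^2$, where $w=u-v$ solves \eqref{eq:difference}, and to show that the nudging term dominates on a large enough fraction of each period $\tau_C/N^2$ to force exponential decay with rate $c_*$. First I would take the $L^2$ inner product of \eqref{eq:difference} with $w$. The trilinear term $\langle \mathbb P(w\cdot\nb w),w\rangle$ vanishes by incompressibility, and $\langle \mathbb P(u\cdot\nb w),w\rangle=0$ as well; the only surviving nonlinear contribution is $-\langle w\cdot\nb u,w\rangle$, which I bound using Ladyzhenskaya: $|\langle w\cdot\nb u,w\rangle|\le \|w\|_{L^4}^2\|\nb u\|_{L^2}\le C_L^2\|w\|_{L^2}\|\nb w\|_{L^2}\|\nb u\|_{L^2}$, then Young's inequality to absorb a $\tfrac\nu2\|\nb w\|_{L^2}^2$, leaving a term like $\tfrac{C_L^4}{2\nu}\|\nb u\|_{L^2}^2\,y$. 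Here I need the Grashof bound: $\limsup_t\|\nb u\|_{L^2}^2\le \nu^2\la_1 G^2$ (the standard energy estimate, cf.\ \cite{cf}), which by hypothesis holds for all $t\ge0$, so this coefficient is $\lesssim \tfrac{C_L^4\la_1 G^2}{2}\nu$. For the feedback term I write $-\mu\langle \mathbb P I_{h,t}w,w\rangle = -\mu\langle I_{h,t}w,w\rangle$ (self-adjointness of $\mathbb P$ on mean-zero fields) and split $w=w\chi_{\iota(t)}+w(1-\chi_{\iota(t)})$. On the window, $\langle I_{h,t}w,\,w\chi_{\iota(t)}\rangle = \|w\chi_{\iota(t)}\|_{L^2}^2 + \langle I_{h,t}w - w\chi_{\iota(t)},\,w\chi_{\iota(t)}\rangle$, and \eqref{Poincare} plus \eqref{ineq.L2bounded} and the $h$-smallness hypotheses give $-\mu\langle I_{h,t}w,w\rangle\le -\tfrac\mu2\|w\chi_{\iota(t)}\|_{L^2}^2 + (\text{small})\|\nb w\|_{L^2}^2$, the last term again absorbed into the dissipation. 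Thus I arrive at a differential inequality of the schematic form
\[
\frac{d}{dt}y + \nu\la_1 y \le \Big(2C_L^4\la_1\nu G^2\Big)y \;-\; \mu\,\|w\chi_{\iota(t)}\|_{L^2}^2 .
\]

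The crux is turning the \emph{local} damping $-\mu\|w\chi_{\iota(t)}\|_{L^2}^2$ into \emph{global} decay. The mechanism is the active/dominant dichotomy: at each time $t$, if the current window $\Om_{\iota(t)}$ happens to be active, then $\|w\chi_{\iota(t)}\|_{L^2}^2\ge \tfrac1{c_0N^2}y$, and choosing $\mu\ge c_0N^2(4\la_1C_L^4\nu G^2+c_*)$ makes the negative term beat everything with room to spare, giving $\tfrac{d}{dt}y\le -c_* y$ on that subinterval. The difficulty is that $\iota(t)$ cycles through \emph{all} windows regardless of where the error lives, so on most subintervals $\Om_{\iota(t)}$ may be neither active nor dominant and we only get $\tfrac{d}{dt}y\le (2C_L^4\la_1\nu G^2)y$, i.e.\ possible exponential \emph{growth} by a factor $e^{C_L^4(G^2+\cdots)\tau_C/N^2}$ per window. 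The resolution — and the reason for the $\tau_C\lesssim \mu^{-1}e^{-2C_L^4(G^2+2)}$ hypothesis — is a pigeonhole/averaging argument over one full cycle: since $\sum_i\int|w|^2\phi_i = \int|w|^2$, in any configuration at least one $\Om_i$ is dominant, hence active; more quantitatively, I would show that over a full period $[t_0,t_0+\tau_C]$ the window visits the (possibly time-varying) dominant region, and because $\tau_C$ is tiny the growth accumulated while visiting the $\le N^2-1$ "bad" windows is at most a bounded multiplicative constant $e^{2C_L^4(G^2+2)}$, which is then overwhelmed by the contraction obtained during the visit(s) to active windows. Summing over cycles yields $y(t)\le C\,y(t_0)e^{-c_*(t-t_0)/2}$ for all $t$, and at $t_0=0$ we have $y(0)=\|u(0)\|_{L^2}^2\le \nu^2\la_1^{-1}\|u(0)\|_{?}$—more precisely one uses $v(0)=0$ so $w(0)=u(0)$ and bounds $\|u(0)\|_{L^2}^2\le \nu^2 G^2$ (again from the eventual-regularity hypothesis, absorbing $\la_1$); the prefactor $e^{C_L^4}$ collects the constants from the absorptions and the single-cycle slack.

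I expect the main obstacle to be making the single-cycle pigeonhole rigorous when the dominant window \emph{moves} during the cycle: one must argue that there is always \emph{some} instant in each period where $\iota(t)$ coincides with an active window, and control how much $y$ can have changed between the start of the period and that instant. The cleanest way is probably to track $\log y(t)$, note that between any two times in a single period its increase is at most $2C_L^4(G^2+2)\,(\tau_C/N^2)\cdot N^2 = 2C_L^4(G^2+2)\tau_C$ (hence $O(1)$ by the $\tau_C$ hypothesis), so the "activity" status of a window at the start of its visit is comparable to its status throughout the cycle; then on the visit to a window that is active at that moment one gets genuine exponential decrease, and the two effects combine. A secondary technical point is justifying $\tfrac{d}{dt}\|w\|_{L^2}^2$ rigorously (the solutions are strong by the well-posedness assertion quoted from Remark \ref{remark:Existence}, so this is standard), and carefully checking that the stated $h$-conditions are exactly what is needed to absorb both the $I_{h,t}$-Poincaré error and the $L^2$-boundedness cross term into $\tfrac\nu2\|\nb w\|_{L^2}^2$.
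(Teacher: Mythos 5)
Your global energy estimate, the role of the hypotheses on $\mu$ and $h$, and the overall ``decay during the active visit beats growth during the rest of the cycle'' strategy all match the paper. But there is a genuine gap at exactly the point you flag as the main obstacle, and your proposed fix does not close it. You suggest controlling how much $y(t)=\|w(t)\|_{L^2}^2$ (equivalently $\log y$) can change over one cycle and inferring that ``the activity status of a window at the start of its visit is comparable to its status throughout the cycle.'' Control of the \emph{global} energy cannot give this: the error can redistribute spatially --- draining out of $\Om_i$ and into $\Om_j$ --- while $\|w\|_{L^2}^2$ barely changes, so the region that is dominant when a cycle begins may be essentially empty by the time the prescribed schedule $\iota(t)$ brings the window there. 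Since the schedule is fixed in advance, the pointwise-in-time pigeonhole (``at every instant some region is dominant'') does not guarantee that the window is ever observing an active region. What is actually needed, and what the paper proves in Section \ref{sec.local}, is a \emph{local} energy inequality: testing \eqref{eq:difference} against $w\psi_i$ and bounding $\frac{d}{dt}\|w\psi_i^{1/2}\|_{L^2}^2$ from above (Lemmas \ref{lemma.3.1}--\ref{lemma:tauC}), so that a region dominant at time $t_1$ provably remains active on all of $[t_1,t_1+\tau_C]$.

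This forces a second ingredient your proposal omits entirely. The local estimate unavoidably produces terms proportional to $\|\nb w\|_{L^2}^2$ (from the nonlinear transport/pressure fluxes across $\supp\nb\psi_i$, cf.\ Lemma \ref{lemma.3.1}), which can only be converted into $C\mu\|w\|_{L^2}^2$ when the Dirichlet quotient $\mathcal Q=\|\nb w\|_{L^2}^2/\|w\|_{L^2}^2$ is bounded by a multiple of $\mu/\nu$. The paper therefore splits time into Scenario 1 ($\mathcal Q\le \mu/\nu$, where Lemma \ref{lemma.Dirichlet} propagates the bound for a time $\tau_{\mathcal Q}\sim\mu^{-1}$ and the activity-persistence plus nudging argument of Lemma \ref{lemma:sync1} applies) and Scenario 2 ($\mathcal Q>\mu/\nu$, where the nudging term is dropped and $\nu\|\nb w\|_{L^2}^2\ge\mu\|w\|_{L^2}^2$ lets dissipation alone give the rate $c_*$, Lemma \ref{lemma:sync2}); the theorem is then assembled by interleaving the two scenarios. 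Without this dichotomy your argument has no way to handle times at which the gradient is large relative to the energy, which is precisely when the error can migrate fastest between observation windows. The remaining pieces of your sketch (absorption of the interpolant errors under the stated $h$ conditions, the choice of $\mu\ge c_0N^2(4\la_1C_L^4\nu G^2+c_*)$, the initial bound $\|w(0)\|_{L^2}^2=\|u_0\|_{L^2}^2\le\nu^2G^2$) are correct and consistent with the paper.
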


 The suppressed constant in the constraint on $\tau_C$ can be extracted from \eqref{C5}, \eqref{C6} and \eqref{C7} below.

 \begin{remark}
As is well-known, data assimilation results resembling \cite{AOT} imply the  classical determining parameter theorems of \cite{FT,FTiti,JonesTiti,JonesTiti2}. These state that if two solutions to \eqref{eq:NS} agree at a course resolution, then they converge to each other exponentially. In our case, Theorem \ref{theorem} implies a local version of such results. More precisely, if $u$ and $\td u$ are two solutions to \eqref{eq:NS} with the same forcing satisfying the assumptions for $u$ and $f$ in Theorem \ref{theorem}, then, provided
\[
I_{h,t}(u-\td u) (t)=0,
\]
for all $t>0$, we  have that
$\| (u - \td u) (t)\|_{L^2} \to 0$ exponentially as $t\to \I$.
The precise statement of such a corollary can be improved---e.g., by assuming $I_{h,t}(u-\td u) (t) \to 0$ as $t\to \I$---but this would require additional work which is redundant to the existing literature.
\end{remark}

\subsection{Organization and discussion of the proof} Global \textit{a priori} estimates are worked out in Section \ref{sec.global}. In Section \ref{sec.local}, local estimates are obtained which show that, if a region is dominant at time $t_1$, then it remains active for a short period of time assuming the Dirichlet quotient is suitably bounded. In Section \ref{sec.da}, we prove Theorem \ref{theorem}. The proof involves two cases. The first case holds when the Dirichlet quotient is suitably bounded, and so, due to the work in Section \ref{sec.local}, at least one region is active for a short period of time. This becomes our cycling time. If the interpolant  cycles through all regions fast enough, then it must be in an active region for a short time. For these times, nudging is strong enough to drive synchronization across the full cycle time. In the second scenario, the Dirichlet quotient is bounded below which ensure dissipation is strong enough to drive synchronization without using the data assimilation term.

\section{Global \textit{a priori} estimates}\label{sec.global}

In this section we establish \textit{a priori} bounds for solutions to \eqref{eq:DA} and \eqref{eq:difference}.  For a solution $u$ to \eqref{eq:NS}, it is well known that, if $u_0$ is divergence free, mean zero and belongs to $H^1$, and $f$ is divergence free, mean zero and belongs to $L^\I(0,\I;L^2)$, then there exists a unique solution $u$ to \eqref{eq:NS} which has zero mean. This solution satisfies a number of properties.   We make use of the following which are taken directly from \cite{AOT}---proofs can be found in \cite{cf,Temam,Robinson} among other references.  
For any time $T$ we have 
    \EQ{\label{inclusions1}
    u\in C([0,T];H^1) \cap L^2(0,T;H^2);\quad \frac {du} {dt} \in L^2(0,T;L^2).
    }
Additionally, 
\EQ{\label{inclusions2}
\|u\|_{L^2}^2 \leq  \nu^2 G^2;\qquad \|\nb u\|_{L^2}^2 \leq   \nu^2 \la_1 G^2;\qquad \int_t^{t+T}\| \Delta u\|_{L^2}^2\,ds \leq (1+T \nu \la_1)\nu \la_1 G^2.
} 
We take these properties to hold from time $t=0$.

Following the classical literature, \textit{a priori} estimates are obtained from the Gr\"onwall inequality (see, e.g., \cite{Robinson}) which states that
\[
x(t)\leq x(0)e^{G(t)} +\int_0^t e^{G(t)-G(s)}h(s)\,ds; \qquad 
G(t)=\int_0^t g(r)\,dr,
\]
provided  
\[
\frac {dx} {dt}\leq g(t)x +h(t).
\]
If $g=a$ and $h=b$ are constant, then we have
\[
x(t)\leq \bigg(x_0+\frac b a\bigg)e^{at}- \frac b a. 
\]

\subsection{Energy estimate for $v$}

For the solution $v$ to \eqref{eq:DA}, we have \EQ{\label{eq:8.9.22}
\frac 12\frac d {dt} \|v\|_{L^2}^2 +\nu \|\nb v\|_{L^2}^2 &= \langle f+\mu I_{h,t} (u),v\rangle - \mu \langle I_{h,t} (v) , v\rangle.
}  
Using Young's inequalities and \eqref{ineq.L2bounded}, we have  
\EQ{\label{ineq.8.9.22}
\langle f+\mu I_{h,t} (u),v\rangle 
&\leq \frac 1 {\nu \la_1} \|f+\mu I_{h,t}(u)\|_{L^2}^2 + \frac {\nu \la_1} 4 \|v\|_{L^2}^2
\\&\leq \frac {2{C_I}^2} {\nu \la_1} (\|f\|_{L^2}^2 + \mu^2 \|u\|_{L^2}^2) + \frac {\nu \la_1} 4 \|v\|_{L^2}^2.
}
On the other hand
\[
\mu \langle I_{h,t} (v) , v\rangle = \mu \langle I_{h,t} (v) -v \chi_{\iota(t)}, v\rangle + \mu\int |v|^2\chi_{\iota(t)}\,dx.
\]
The   second term above has a good sign. By \eqref{Poincare}, the remaining part satisfies 
\[
\mu \langle I_{h,t} (v) -v\chi_{\iota(t)}, v\rangle \leq C_I \mu h\|\nb v\|_{L^2} \|v\|_{L^2} \leq C_I \mu h \la_1^{-1/2} \|\nb v\|_{L^2}^2.
\]
Provided, 
\begin{equation}\label{C1}\tag{C1}{C_I \mu h \la_1^{-1/2} \leq \frac \nu 4,}
\end{equation}
it follows that 
\EQ{\label{ineq:v.differential}
\frac d {dt} \|v\|_{L^2}^2  +\frac {\nu \la_1} 2 \|v\|_{L^2}^2 + \frac \nu 8 \|\nb v\|_{L^2}^2\leq  \frac {4C_I^2}{\nu\la_1}(\|f\|_{L^2}^2 +\mu^2 \|u\|_{L^2}^2),
}
and, by Gr\"onwall, we conclude
\EQN{
\| v(t)\|_{L^2}^2 \leq e^{-\nu \la_1t}\|v(0)\|_{L^2}^2 + \frac {4C_I^2} {\nu \la_1} (\|f\|_{L^2}^2 +\mu^2 \|u\|_{L^2}^2) (1-e^{-\nu \la_1 t}).
}
If $v(0)=0$ then, using the uniform bound on $\|u\|_{L^2}$ in \eqref{inclusions2}, we get
\[
\| v(t)\|_{L^2}^2 \leq 4C_I^2 [\nu^2+(\mu/\la_1)^2]G^2.
\]
 Integrating \eqref{ineq:v.differential} we also obtain the bound
 \[
 \frac \nu 8 \int_0^T \|\nb v\|_{L^2}^2\,ds \leq \|v_0\|_{L^2}^2 +   \frac {4C_I^2T}{\nu\la_1}(\|f\|_{L^2}^2 +\mu^2 \|u\|_{L^2}^2).
 \]
\begin{remark}\label{remark:Existence}
Note that the above estimates are strong enough to prove  estimates, existence and uniqueness as in  \cite[Theorem 5]{AOT}. For mean zero data and forcing, $v$ will be mean zero. Since the proofs are identical to the existing literature, we omit the details.  
\end{remark} 

\subsection{Energy estimate for $w$}
For the solution $w=u-v$ to \eqref{eq:difference}, 
we have by Ladyzhenskaya's inequality \eqref{ineq.lady} that
\[
\int (w\cdot \nb u)\cdot w\,dx \leq C_L^2\|w\|_{L^2}\|\nb w\|_{L^2} \|\nb u \|_{L^2}.
\]
Hence,
\EQN{
\frac 1 2\frac d {dt} \|w\|_{L^2}^2 +\nu \|\nb w\|_{L^2}^2 &= -\int (w\cdot \nb u)\cdot w\,dx - \mu (I_{h,t} (w  )-w\chi_{\iota(t)}, w ) - \mu \int |w|^2\chi_{\iota(t)}\,dx 
\\&\leq \frac {C_L^4} {2\nu} \|\nb u\|_{L^2}^2 \|w\|_{L^2}^2 + \frac \nu 2 \|\nb w\|_{L^2}^2 +C_I^2 {\mu h^2} \|(\nb w)\chi_{\iota(t)}\|_{L^2}^2
\\&\quad + \mu \int |w|^2\chi_{\iota(t)}\,dx  - \mu \int |w|^2\chi_{\iota(t)}\,dx,
}  
{where we used the property \eqref{Poincare}.}
Choosing 
\begin{equation}\tag{C2}\label{C2}
 {C_I^2 {\mu h^2}\leq \frac \nu 2,}
\end{equation}
and using \eqref{inclusions2}, we have 
\EQN{
\frac d {dt} \|w\|_{L^2}^2     &\leq   \frac {C_L^4} \nu \|\nb u\|_{L^2}^2 \|w\|_{L^2}^2
\leq C_L^4 \nu \la_1 G^2 \|w\|_{L^2}^2
}
It follows that for $t\geq t_0$ 
\EQ{\label{ineq:EnergyBoundw}
\|w(t)\|_{L^2}^2 \leq \|w(t_0)\|_{L^2}^2e^{C_L^4 \nu \la_1 G^2(t-t_0) }.
}

\subsection{Enstrophy estimate}

For the enstrophy, by a standard cancellation in the nonlinearity in the periodic setting, see \cite[(14)-(15)]{AOT} as well as \cite[p.~294]{AOT}, 
\EQN{
\frac 1 2\frac d {dt} \|\nb w\|_{L^2}^2  +\nu\|\Delta w\|_{L^2}^2 &= -\int (w\cdot \nb w )\cdot\Delta u\,dx - \mu (I_{h,t} (w  ) , \Delta w ).
}
We have by \eqref{ineq.lady} and Poincar\'e's inequality that
\EQN{
\bigg|\int (w\cdot \nb w )\cdot\Delta u\,dx\bigg|&\leq \|w\|_{L^4} \| \nb w\|_{L^4} \|\Delta u\|_{L^2}
\\&\leq C_L^2 \|w\|_{L^2}^{1/2} \|\nb w\|_{L^2} \|\Delta w\|_{L^2}^{1/2} \|\Delta u\|_{L^2}
\\&\leq C_L^2\la_1^{-1/2}   \|\nb w\|_{L^2}\|\Delta w\|_{L^2}  \|\Delta u\|_{L^2}
\\&\leq \frac \nu 2 \| \Delta w\|_{L^2}^2 + \frac {C_L^4}{2\nu \la_1} \|\nb w\|_{L^2}^2 \| \Delta u\|_{L^2}^2.
}
Hence,
\EQN{
\frac 1 2 \frac d {dt} \|\nb w\|_{L^2}^2  +\nu\|\Delta w\|_{L^2}^2 &\leq \frac \nu 2 \| \Delta w\|_{L^2}^2 + \frac {C_L^4}{2\nu \la_1} \|\nb w\|_{L^2}^2 \| \Delta u\|_{L^2}^2
 - \mu (I_{h,t} (w ) , \Delta w )
 \\&\leq   \nu  \| \Delta w\|_{L^2}^2 + \frac {C_L^4}{2\nu \la_1} \|\nb w\|_{L^2}^2 \| \Delta u\|_{L^2}^2 + \frac {\mu^2 {C_I}^2} {2\nu} \|w\|_{L^2}^2,
} 
where we used \eqref{ineq.L2bounded} and Young's inequality.
We then have by Gr\"onwall that, for $t>t_0$, 
\EQN{
  \|\nb w(t)\|_{L^2}^2    &\leq   \| \nb w(t_0)\|_{L^2}^2 e^{\frac {C_L^4} {\nu \la_1}\int_{t_0}^t\|\Delta u(s)\|_{L^2}^2\,ds }
  \\&+ \frac {\mu^2C_I^2} \nu \int_{t_0}^t e^{ C_L^4 (\nu \la_1)^{-1} \int_{s}^t \|\Delta u(r)\|_{L^2}^2 \,dr  }\|w(s)\|_{L^2}^2\,ds
 \\&\leq 
   \| \nb w(t_0)\|_{L^2}^2 e^{C_L^4G^2(1+\nu \la_1 (t-t_0))}
 \\&+ \frac {\mu^2C_I^2} \nu (t-t_0) \sup_{t_0<s<t}\|w(s)\|_{L^2}^2 e^{C_L^4G^2(1+\nu\la_1 (t-t_0))},
}
where we used \eqref{inclusions2}.
Using \eqref{ineq:EnergyBoundw} and assuming that 
\begin{equation}
    {t-t_0\leq  \mu^{-1},} \tag{C3}\label{C3}
\end{equation} we obtain
\EQ{\label{ineq:EnstrophyBoundw.0}
  \|\nb w\|_{L^2}^2(t)    &\leq  
\| \nb w(t_0)\|_{L^2}^2 e^{C_L^4 G^2(1+\nu\la_1 \mu^{-1}) }
\\& +\frac {\mu C_I^2} \nu  \|w(t_0)\|_{L^2}^2 e^{C_L^4 \nu \la_1 G^2\mu^{-1} }   e^{C_L^4G^2(1+\nu\la_1 \mu^{-1})}.
}
If \begin{equation}\label{C4}\tag{C4}{\nu \la_1 G^2 \leq \mu},\end{equation} then this simplifies to 
\EQ{\label{ineq:EnstrophyBoundw}
  \|\nb w\|_{L^2}^2(t)    &\leq
\| \nb w(t_0)\|_{L^2}^2 e^{C_L^4(G^2   +1) }
+\frac {\mu C_I^2}\nu \|w(t_0)\|_{L^2}^2  e^{C_L^4(G^2+2)}.
}

\subsection{Lower bound for $\frac d {dt} \|w\|_{L^2}^2$}
We also require a lower bound for $\frac d {dt} \|w\|_{L^2}^2$. To obtain this note that
\EQN{
\frac 1 2 \frac d {dt} \|w\|_{L^2}^2 &\geq - \nu \|\nb w\|_{L^2}^2 -\bigg|\int (w\cdot \nb u)\cdot w\,dx\bigg|  - \big| \mu \langle I_{h,t} (w  ) , w \rangle \big| 
\\&\geq -\nu \|\nb w\|_{L^2}^2 -C_L^2\|w\|_{L^2}\|\nb w\|_{L^2}\|\nb u\|_{L^2} -\mu C_I \|w\|_{L^2}^2
\\&\geq -\nu \|\nb w\|_{L^2}^2 - \frac {C_L^4}{\nu} \|w\|_{L^2}^2 \|\nb u\|_{L^2}^2 - \nu \|\nb w\|_{L^2}^2 -\mu C_I \|w\|_{L^2}^2
\\&\geq -2 \nu \|\nb w\|_{L^2}^2 -\frac {C_L^4}{\nu} \|w\|_{L^2}^2 \|\nb u\|_{L^2}^2 -\mu C_I \|w\|_{L^2}^2.
}
Then, by \eqref{C4},
\EQ{\label{lower}
\frac d {dt} \|w\|_{L^2}^2 &\geq -4 \nu \|\nb w\|_{L^2}^2 -2C_L^4 \nu\la_1 G^2 \|w\|_{L^2}^2 - 2\mu C_I\|w\|_{L^2}^2
\\&\geq -4\nu \|\nb w\|_{L^2}^2 - \mu (2C_L^4 +2C_I) \|w\|_{L^2}^2.
}

\subsection{Bound for the Dirichlet quotient}
The Dirichlet quotient $\mathcal Q$ is defined to be 
 \EQN{
 \mathcal Q(t):=\frac{\|\nb w\|_{L^2}^2} {\|w\|_{L^2}^2}.
 }
We introduce two scenarios:
\begin{itemize}
    \item Scenario 1: all $t$ such that  $\mathcal Q(t) \leq \mu /\nu  $ and,
    \item Scenario 2: all $t$ such that $\mathcal Q(t) > \mu /\nu$.
\end{itemize}

\begin{lemma} \label{lemma.Dirichlet}Let $\mu$ satisfy \eqref{C4} and $h$ satisfy \eqref{C2}. 
Assume that $t_1$ is a scenario 1 time.
Let 
\EQN{
\tau_{\mathcal Q} = \frac 1 {2\mu}   \big( (8C_I^2 +2C_L^4 +2C_I)e^{C_L^4 (G^2+1)} \big)^{-1}    .
}
Then, for all $t\in [t_1,t_1+\tau_{\mathcal Q}]$,  
 \EQN{
 \mathcal Q(t) 
 \leq 4C_I^2 \frac {\mu } \nu e^{C_L^4(G^2+2)}.
 }
\end{lemma}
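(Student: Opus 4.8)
\textbf{Proof proposal for Lemma \ref{lemma.Dirichlet}.}

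The plan is to run a differential inequality for the Dirichlet quotient $\mathcal Q = \|\nb w\|_{L^2}^2/\|w\|_{L^2}^2$ on the short interval $[t_1, t_1+\tau_{\mathcal Q}]$, using the enstrophy upper bound \eqref{ineq:EnstrophyBoundw} and the energy lower bound \eqref{lower} which are already available. First I would bound $\mathcal Q(t)$ from above by comparing the numerator at time $t$ to the denominator at time $t$. The numerator: since $t_1$ is a scenario 1 time, $\|\nb w(t_1)\|_{L^2}^2 \leq (\mu/\nu)\|w(t_1)\|_{L^2}^2$, and \eqref{ineq:EnstrophyBoundw} (valid since \eqref{C2} and \eqref{C4} hold, and since $\tau_{\mathcal Q} \leq \mu^{-1}$ so \eqref{C3} holds with $t_0=t_1$) gives, for $t\in[t_1,t_1+\tau_{\mathcal Q}]$,
\[
\|\nb w(t)\|_{L^2}^2 \leq \frac{\mu}{\nu}\|w(t_1)\|_{L^2}^2 e^{C_L^4(G^2+1)} + \frac{\mu C_I^2}{\nu}\|w(t_1)\|_{L^2}^2 e^{C_L^4(G^2+2)} \lesssim \frac{\mu C_I^2}{\nu}\|w(t_1)\|_{L^2}^2 e^{C_L^4(G^2+2)}.
\]
(Here one absorbs the first term into the second using $C_I\geq 1$, or more simply keeps both with a $(1+C_I^2)$ prefactor; the constant in the statement is $4C_I^2$, so there is room.)

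The denominator: I need to show $\|w(t)\|_{L^2}^2$ does not drop too much below $\|w(t_1)\|_{L^2}^2$ over the interval, i.e.\ that the energy at time $t$ is at least a fixed fraction of the energy at $t_1$. Apply \eqref{lower}: $\frac{d}{dt}\|w\|_{L^2}^2 \geq -4\nu\|\nb w\|_{L^2}^2 - \mu(2C_L^4+2C_I)\|w\|_{L^2}^2$. Using the enstrophy bound just derived to control $\|\nb w(s)\|_{L^2}^2 \lesssim \frac{\mu C_I^2}{\nu}\|w(t_1)\|_{L^2}^2 e^{C_L^4(G^2+2)}$ for $s$ in the interval, one gets $\frac{d}{dt}\|w\|_{L^2}^2 \geq -8\mu C_I^2 e^{C_L^4(G^2+2)}\|w(t_1)\|_{L^2}^2 - \mu(2C_L^4+2C_I)\|w\|_{L^2}^2$. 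Then a Gr\"onwall/comparison argument on this linear ODE over a time interval of length $\tau_{\mathcal Q} \sim \big(2\mu(8C_I^2+2C_L^4+2C_I)e^{C_L^4(G^2+1)}\big)^{-1}$ shows $\|w(t)\|_{L^2}^2 \geq \tfrac12 \|w(t_1)\|_{L^2}^2$ (the exponent $\mu(2C_L^4+2C_I)\tau_{\mathcal Q}$ and the accumulated forcing term $8\mu C_I^2 e^{\cdots}\tau_{\mathcal Q}$ are each $\leq$ a small absolute constant by the choice of $\tau_{\mathcal Q}$, so $e^{-(\cdots)} \geq$ something and the drop is at most a factor $2$). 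Combining the two bounds, $\mathcal Q(t) = \|\nb w(t)\|_{L^2}^2/\|w(t)\|_{L^2}^2 \lesssim \frac{\mu C_I^2}{\nu}e^{C_L^4(G^2+2)} / \tfrac12 = 4C_I^2 \frac{\mu}{\nu}e^{C_L^4(G^2+2)}$, matching the claim (with care to keep the constants adding up to exactly $4C_I^2$, using $e^{C_L^4(G^2+1)} \leq e^{C_L^4(G^2+2)}$ and $C_I, C_L \geq 1$ generously).

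The main obstacle I expect is the bookkeeping needed to make the constants close exactly to $4C_I^2$ rather than some larger harmless constant: one must choose how to split the $\mu/\nu$ from the scenario-1 hypothesis versus the $e^{C_L^4}$ factors, and track that the energy loses at most a factor of $2$ over $[t_1,t_1+\tau_{\mathcal Q}]$ — this forces the precise form of $\tau_{\mathcal Q}$ with the $(8C_I^2+2C_L^4+2C_I)$ and $e^{C_L^4(G^2+1)}$ factors. A secondary subtlety is a self-consistency/continuity point: the enstrophy bound I use on $[t_1, t_1+\tau_{\mathcal Q}]$ is \eqref{ineq:EnstrophyBoundw} applied with $t_0 = t_1$, which is legitimate since \eqref{C3} is exactly $t - t_0 \leq \mu^{-1}$ and $\tau_{\mathcal Q} < \mu^{-1}$; one should note this explicitly. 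Everything else is routine Gr\"onwall manipulation of the type already appearing in Section \ref{sec.global}.
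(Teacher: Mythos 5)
Your proposal is correct and follows essentially the same route as the paper: bound the enstrophy on $[t_1,t_1+\tau_{\mathcal Q}]$ via \eqref{ineq:EnstrophyBoundw} together with the scenario-1 hypothesis at $t_1$, show via \eqref{lower} that the energy loses at most a factor of $2$ over the interval (which fixes the form of $\tau_{\mathcal Q}$), and take the quotient. The only cosmetic difference is that the paper runs the energy lower bound through the mean value theorem combined with \eqref{ineq:EnergyBoundw} rather than a Gr\"onwall comparison on the linear ODE, but this is the same computation.
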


\begin{proof}
We have  from \eqref{ineq:EnstrophyBoundw} that, for $t_2-t_1\leq \mu^{-1}$, which is exactly \eqref{C3}, and assuming $t_1$ is a Scenario 1 time,
\EQ{\label{ineq:scenario1enstrophy}
 \|\nb w\|_{L^2}^2(t_2)    &\leq 2 \frac {\mu C_I^2} \nu
\| w(t_1)\|_{L^2}^2 e^{C_L^4(G^2   +2) }.
}
From the mean value theorem, \eqref{lower}, \eqref{C3} and \eqref{C4},   we have
\EQ{\label{ineq:lowerboundondifferencequotient}
\|w(t_2)\|_{L^2}^2 - \|w(t_1)\|_{L^2}^2 &\geq (t_2-t_1)\inf_{t_1<t<t_2} \frac d {dt} \|w\|_{L^2}^2(t) 
\\& \geq (t_2-t_1) \inf_{t_1<t<t_2}(-4\nu \|\nb w\|_{L^2}^2 -(2C_L^4 +2C_I)\mu \|w\|_{L^2}^2).
\\&\geq - (t_2-t_1) \mu \| w(t_1)\|_{L^2}^2 \big( 8C_I^2 e^{C_L^4 (G^2+1)}+(2C_L^4 +2C_I)e^{C_L^4 \nu\la_1 G^2 (t_2-t_1)} \big)
\\&\geq - (t_2-t_1) \mu \| w(t_1)\|_{L^2}^2 \big( (8C_I^2 +2C_L^4 +2C_I)e^{C_L^4 (G^2+1)} \big),
}
where we used   \eqref{ineq:scenario1enstrophy} and \eqref{ineq:EnergyBoundw}.
 It follows that
\EQ{\label{ineq.aaa}
\|w(t_2)\|_{L^2}^2  \geq \bigg(1-   (t_2-t_1) \mu  
 \big( (8C_I^2 +2C_L^4 +2C_I)e^{C_L^4 (G^2+1)} \big)\bigg) \|w(t_1)\|_{L^2}^2.
}
If
 \begin{equation} 
 \label{D5}
   t_2-t_1 \leq \frac 1 {2\mu}     \big( (8C_I^2 +2C_L^4 +2C_I)e^{C_L^4 (G^2+1)} \big)^{-1},   
 \end{equation}
 then 
 \EQ{\label{ineq:scenario1energy}
 \|w(t_2)\|_{L^2}^2 \geq \frac 1 2 \|w(t_1)\|_{L^2}^2.
}
Note that \eqref{D5} implies \eqref{C3}.
 So,   using \eqref{ineq:scenario1enstrophy} in the numerator and \eqref{ineq:scenario1energy} in the denominator, the Dirichlet quotient $\mathcal Q(t_2)$ is bounded by 
 \EQN{
 \mathcal Q(t_2)  \leq 4 \frac {\mu C_I^2} \nu e^{C_L^4(G^2+2)}
,
 }
 provided $t_2-t_1\leq \tau_C$.

\end{proof}



\section{Local estimates and active regions}\label{sec.local}

We make use of the following \textit{local energy equality},
\EQ{\label{eq:le}
&\frac 1 2\frac d {dt} \| w\psi^{1/2} \|_{L^2}^2 =   \langle   {\nu} \Delta w +  \mathbb P (w\cdot \nb w -u\cdot\nb w - w\cdot\nb u) - \mu \langle \mathbb P I_{h,t} (w)          , w\psi \rangle
}
where $\psi$ is a smooth non-negative function with compact support. This is obtained from \eqref{eq:DA} by  testing against $w\psi$. In this section we establish an upper bound for the left-hand side of \eqref{eq:le}. By taking $\psi = \psi_i$ as defined in \eqref{defpsi},   this will imply that if a region is dominant at a given time $t_1$, then it remains active at least for a period of time depending on the parameters of the problem and $\|w(t_1)\|_{L^2}$.

\begin{lemma}\label{lemma.3.1} Assuming $u,w\in H^1$ and $\psi\in \{\psi_i\}_{i=1}^n$ where $\psi_i$ are defined in \eqref{defpsi}, we have 
\EQN{
\langle \mathbb P (w\cdot \nb w -u\cdot\nb w -w\cdot\nb u), w\psi \rangle &\leq C_L^2 ( \|w\|_{L^2} +4\nu G  ) \|\nb w\|_{L^2}^2.
}
\end{lemma}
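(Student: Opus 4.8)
The plan is to bound the three trilinear terms $\langle \mathbb P (w\cdot\nb w), w\psi\rangle$, $-\langle \mathbb P(u\cdot\nb w),w\psi\rangle$, and $-\langle\mathbb P(w\cdot\nb u),w\psi\rangle$ separately. First I would dispose of the Leray projection: since $\mathbb P$ is the difference of the identity and a gradient, testing against $w\psi$ picks up a pressure contribution $\langle \nb p_\star, w\psi\rangle = -\int p_\star\, w\cdot\nb\psi\, dx$ for each nonlinearity, where $p_\star$ solves $-\Delta p_\star = \pd_i\pd_j(\cdot)_{ij}$ with the appropriate quadratic tensor ($w\otimes w$, $u\otimes w$, or $w\otimes u$), and the Calder\'on–Zygmund operator $(-\Delta)^{-1}\pd_i\pd_j$ is bounded on $L^2$. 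So every term reduces, after an integration by parts, to an integral in which at least one derivative has been moved onto $\psi$, at the cost of factors $|\nb\psi|\lesssim r^{-1}$. Since $r=\ell/4=L/(4N)$ is a fixed constant depending only on $N,L$, these prefactors are absorbed into the suppressed constant — consistent with the theorem's hypothesis that the constant may depend on $N$, $L$ and $\phi_1$.

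Next I would estimate each resulting integral with Ladyzhenskaya \eqref{ineq.lady}, Poincar\'e, and Young. For $\langle\mathbb P(w\cdot\nb w),w\psi\rangle$: the non-pressure part is $\sim\int|w|^2 w\cdot\nb\psi\,dx$ (after moving the derivative off $w$ using $\nb\cdot w=0$ and the cancellation $\int (w\cdot\nb w)\cdot w\psi = -\tfrac12\int|w|^2 w\cdot\nb\psi$), bounded by $r^{-1}\|w\|_{L^4}^2\|w\|_{L^2}\lesssim r^{-1}\|w\|_{L^2}^2\|\nb w\|_{L^2}$; similarly the pressure part is $\lesssim r^{-1}\|w\otimes w\|_{L^2}\|w\|_{L^2}\lesssim r^{-1}\|w\|_{L^2}^2\|\nb w\|_{L^2}$ (using $\|w\|_{L^4}^2\le C_L^2\|w\|_{L^2}\|\nb w\|_{L^2}$). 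For $-\langle\mathbb P(u\cdot\nb w),w\psi\rangle$: again $\int (u\cdot\nb w)\cdot w\psi = -\tfrac12\int|w|^2 u\cdot\nb\psi$ after using $\nb\cdot u=0$, so the non-pressure part is $\lesssim r^{-1}\|u\|_{L^2}\|w\|_{L^4}^2\lesssim r^{-1}\|u\|_{L^2}\|w\|_{L^2}\|\nb w\|_{L^2}$, and the pressure part is $\lesssim r^{-1}\|u\otimes w\|_{L^2}\|w\|_{L^2}\lesssim r^{-1}\|u\|_{L^4}\|w\|_{L^4}\|w\|_{L^2}\lesssim r^{-1}\|u\|_{L^4}\|w\|_{L^2}^{3/2}\|\nb w\|_{L^2}^{1/2}$. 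For $-\langle\mathbb P(w\cdot\nb u),w\psi\rangle$: here no cancellation is available since both $w$'s are "attached," so after the pressure reduction one has a non-pressure term $\int (w\cdot\nb u)\cdot w\psi\,dx$ bounded directly by $\|\nb u\|_{L^2}\|w\|_{L^4}^2\lesssim \|\nb u\|_{L^2}\|w\|_{L^2}\|\nb w\|_{L^2}$ (no $r^{-1}$ needed), plus a pressure term $\lesssim r^{-1}\|w\otimes u\|_{L^2}\|w\|_{L^2}$, same as the $u\cdot\nb w$ case.

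The final step is bookkeeping: all terms so far have the shape $(\text{stuff})\cdot\|\nb w\|_{L^2}$ or $(\text{stuff})\cdot\|\nb w\|_{L^2}^{1/2}$, and I want to collect them into $C_L^2(\|w\|_{L^2}+4\nu G)\|\nb w\|_{L^2}^2$. I would use the \emph{a priori} bounds $\|u\|_{L^2}^2\le\nu^2 G^2$ and $\|\nb u\|_{L^2}^2\le\nu^2\la_1 G^2$ from \eqref{inclusions2}, together with Poincar\'e $\|u\|_{L^2}\le\la_1^{-1/2}\|\nb u\|_{L^2}$ and Ladyzhenskaya $\|u\|_{L^4}\le C_L\|u\|_{L^2}^{1/2}\|\nb u\|_{L^2}^{1/2}\lesssim C_L\nu G$, so every factor involving $u$ is $\lesssim \nu G$ (times $r^{-1}$ or a $C_L$), while the purely-$w$ factors give the $C_L^2\|w\|_{L^2}$ term. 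For the half-power terms like $\|u\|_{L^4}\|w\|_{L^2}^{3/2}\|\nb w\|_{L^2}^{1/2}$, I would rewrite $\|w\|_{L^2}^{3/2}\|\nb w\|_{L^2}^{1/2}\le\la_1^{-1/2}\|w\|_{L^2}\|\nb w\|_{L^2}$ via Poincar\'e to reach the clean $\|\nb w\|_{L^2}^2$-type form, or bound $\|w\|_{L^2}\le\la_1^{-1/2}\|\nb w\|_{L^2}$. The main obstacle — and the only delicate point — is \emph{constant-chasing}: making sure the coefficient of $\|w\|_{L^2}\|\nb w\|_{L^2}^2$ really is $C_L^2$ (not a larger multiple) and that the $u$-dependent coefficient collapses to exactly $4\nu G$, which forces care about which estimate ($L^2$ vs.\ $L^4$ norm of $u$) is used where and how the $r$-dependent and $C_L$-dependent constants are allocated; since the statement hides a constant depending on $\phi_1$, $N$, $L$, the only genuinely sharp-looking pieces are the two displayed coefficients, and everything else can be crudely absorbed.
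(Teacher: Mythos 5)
Your route is genuinely different from the paper's. You decompose $\mathbb P$ as the identity minus a pressure gradient, integrate by parts to throw derivatives onto $\psi$, exploit the cancellations $\int (w\cdot\nb w)\cdot w\psi = -\tfrac12\int|w|^2\,w\cdot\nb\psi$ and $\int (u\cdot\nb w)\cdot w\psi = -\tfrac12\int|w|^2\,u\cdot\nb\psi$, and control the pressure pieces by Calder\'on--Zygmund. That is workable (it is essentially the argument the authors drafted and then abandoned, and it survives in a commented-out block of the source), but it is more machinery than the lemma needs. The paper never differentiates $\psi$ at all: the only property of $\psi$ it uses is $0\le\psi\le1$, so that $\|w\psi\|_{L^4}\le\|w\|_{L^4}$. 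It then estimates the whole pairing by duality, $\langle \mathbb P F, w\psi\rangle \le \|\mathbb P F\|_{L^{4/3}}\|w\psi\|_{L^4}$, uses boundedness of $\mathbb P$ on $L^{4/3}$, H\"older on each trilinear piece, Ladyzhenskaya and Poincar\'e from \eqref{ineq.lady}, and the Grashof bounds \eqref{inclusions2}. No pressure, no cutoff derivatives, no $r$.

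The concrete problem with your version is the constants, and your proposed escape hatch does not exist. Lemma \ref{lemma.3.1} is stated with the explicit right-hand side $C_L^2(\|w\|_{L^2}+4\nu G)\|\nb w\|_{L^2}^2$ and carries \emph{no} suppressed constant; the constant depending on $\phi_1$, $N$, $L$ that you invoke lives only in the hypothesis on $\tau_C$ in Theorem \ref{theorem}, not in this lemma. Every term you produce through $\nb\psi$ comes with a factor $r^{-1}$ (times the implicit constant in $|\nb\phi_1|\lesssim_{\phi_1} r^{-1}$), and converting, e.g., $r^{-1}\|w\|_{L^2}^2\|\nb w\|_{L^2}$ into the target shape via Poincar\'e leaves a coefficient $C_L^2\, r^{-1}\la_1^{-1/2}$ in place of $C_L^2$; the CZ operator norm enters the pressure terms as well. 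So your argument proves the inequality only up to a multiplicative constant depending on $r$, $\la_1$, $C_L$, and $\phi_1$ --- a statement that would in fact suffice for the downstream use in Lemma \ref{lemma:local.bound.time.d} (where everything is swept into $K$ and ultimately into the $\tau_C$ condition), but is not the lemma as written. If you want the stated constants, drop the integration by parts entirely and run the $L^{4/3}$--$L^4$ duality argument.
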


 
\begin{proof}

By the boundedness of $\mathbb P$ in $L^p$ for $1<p<\I$, the term in question satisfies
\EQN{
&\|w\cdot\nb w + u\cdot \nb w +w\cdot \nb u\|_{L^{4/3}} \| w \|_{L^4}   
\\&\leq    \big(   \|\nb w \|_{L^2}  \|w \|_{L^4} +\|\nb w \|_{L^2}  \| u\|_{L^4}+\| \nb u\|_{L^2}  \| w\|_{L^4}\big)    C_L  \| w\|_{L^2}^{1/2} \|\nb w\|_{L^2}^{1/2}
\\&\leq C_L^2 \|w\|_{L^2} \|\nb w\|_{L^2}^2 + C_L^2  \| w\|_{L^2}^{1/2} \|\nb w\|_{L^2}^{3/2} \| u\|_{L^2}^{1/2}\|\nb u\|_{L^2}^{1/2}+C_L^2\| \nb u\|_{L^2}       \| w\|_{L^2}  \|\nb w\|_{L^2} 
\\&\leq C_L^2 \|w\|_{L^2} \|\nb w\|_{L^2}^2 + C_L^2  \la_1^{-1/4} \|\nb w\|_{L^2}^{2} \| u\|_{L^2}^{1/2}\|\nb u\|_{L^2}^{1/2}+C_L^2 \la_1^{-1/2}\| \nb u\|_{L^2}       \|\nb w\|_{L^2}^2
\\&\leq C_L^2 \|w\|_{L^2} \|\nb w\|_{L^2}^2 + C_L^2 2\nu   G  \|\nb w\|_{L^2}^{2} +C_L^2   2 \nu   G       \|\nb w\|_{L^2}^2
\\&\leq C_L^2 ( \|w\|_{L^2} +4\nu G  ) \|\nb w\|_{L^2}^2.
}

\end{proof}

We now obtain an estimate on the local rate of change of the energy.

\begin{lemma}\label{lemma:local.bound.time.d}
Assume $u$ solves \eqref{eq:NS}, $v$ solves \eqref{eq:DA} and  $\psi\in \{\psi_i\}_{i=1}^n$ as defined  in \eqref{defpsi}.
Also assume that $t_1$ is a scenario 1 time. 
Then,    for all times $t$ in $(t_1,t_1+\tau_{\mathcal Q})$, and letting $M>0$ satisfy $\sup_{t_1< t< t_1+\tau_{\mathcal Q}}\|w(t)\|_{L^2}^2 \leq M$, we have
\EQN{
\frac d {dt} \| w\psi^{1/2} \|_{L^2}^2  
\leq  \bigg[ c_\psi\frac \nu {2r^2} +\mu C_I +  C_L^2 M^{1/2}  4 \frac \mu \nu C_I^2 e^{C_L^4(G^2+2)} +16 G \mu C_I^2 e^{C_L^4(G^2+2)}  \bigg] \|w\|_{L^2}^2.
}
\end{lemma}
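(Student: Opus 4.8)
\textbf{Proof plan for Lemma \ref{lemma:local.bound.time.d}.}

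The plan is to start from the local energy equality \eqref{eq:le} with $\psi = \psi_i$ and estimate each of the three groups of terms on the right-hand side: the dissipation/viscous term $\nu\langle \Delta w, w\psi\rangle$, the nonlinear/transport block $\langle \mathbb P(w\cdot\nb w - u\cdot\nb w - w\cdot\nb u), w\psi\rangle$, and the nudging term $-\mu\langle \mathbb P I_{h,t}(w), w\psi\rangle$. For the viscous term, I would integrate by parts to write $\nu\langle \Delta w, w\psi\rangle = -\nu\|\psi^{1/2}\nb w\|_{L^2}^2 - \nu\langle \nb w, w\nb\psi\rangle$ (the cross term absorbed via Young's inequality as $\tfrac\nu2\|\psi^{1/2}\nb w\|^2 + \tfrac\nu2 \|w\nb\psi/\psi^{1/2}\|^2$, or more cheaply estimated using $|\nb\psi|\lesssim r^{-1}$ to get a contribution $\lesssim \tfrac{\nu}{r^2}\|w\|_{L^2}^2$); since we only want an upper bound on $\frac{d}{dt}\|w\psi^{1/2}\|^2$, the genuinely negative $-\nu\|\psi^{1/2}\nb w\|^2$ piece can simply be discarded, leaving only the $c_\psi\frac{\nu}{2r^2}\|w\|_{L^2}^2$ term. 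For the nudging term, apply \eqref{ineq.L2bounded} and Cauchy–Schwarz: $|\mu\langle \mathbb P I_{h,t}(w), w\psi\rangle| \leq \mu\|I_{h,t}(w)\|_{L^2}\|w\psi\|_{L^2} \leq \mu C_I \|w\chi_{\iota(t)}\|_{L^2}\|w\|_{L^2} \leq \mu C_I \|w\|_{L^2}^2$, which produces the $\mu C_I$ term.

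The heart of the argument is the nonlinear block, and here I would invoke Lemma \ref{lemma.3.1} directly, which gives $\langle \mathbb P(w\cdot\nb w - u\cdot\nb w - w\cdot\nb u), w\psi\rangle \leq C_L^2(\|w\|_{L^2} + 4\nu G)\|\nb w\|_{L^2}^2$. The key move is then to convert the $\|\nb w\|_{L^2}^2$ on the right into $\|w\|_{L^2}^2$ using control of the Dirichlet quotient: since $t_1$ is a Scenario 1 time and $t \in (t_1, t_1+\tau_{\mathcal Q})$, Lemma \ref{lemma.Dirichlet} applies and gives $\mathcal Q(t) \leq 4C_I^2 \frac\mu\nu e^{C_L^4(G^2+2)}$, i.e. $\|\nb w\|_{L^2}^2 \leq 4C_I^2 \frac\mu\nu e^{C_L^4(G^2+2)}\|w\|_{L^2}^2$. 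Substituting this and bounding the first factor using $\|w\|_{L^2} \leq M^{1/2}$ on the interval turns the nonlinear contribution into $\big[C_L^2 M^{1/2}\cdot 4\frac\mu\nu C_I^2 e^{C_L^4(G^2+2)} + 4\nu G \cdot C_L^2 \cdot 4C_I^2\frac\mu\nu e^{C_L^4(G^2+2)}\big]\|w\|_{L^2}^2$, and noting $4\nu G \cdot C_L^2 \cdot 4/\nu = 16 C_L^2 G$ gives the stated $16 G\mu C_I^2 e^{C_L^4(G^2+2)}$ term (modulo the placement of the $C_L^2$ factor, which I would track carefully). Collecting the three contributions yields exactly the claimed inequality.

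The main obstacle—really the only nonroutine point—is making sure the hypotheses of Lemma \ref{lemma.Dirichlet} are legitimately in force: that lemma requires \eqref{C4} and \eqref{C2}, and its conclusion holds on $[t_1, t_1+\tau_{\mathcal Q}]$, so I must check that $\tau_{\mathcal Q}$ here matches the one in that lemma and that the open interval $(t_1, t_1+\tau_{\mathcal Q})$ sits inside it; this is immediate from the definitions. A secondary bookkeeping concern is the cross term $\nu\langle \nb w, w\nb\psi\rangle$ in the viscous estimate: one wants to avoid it costing a $\|\nb w\|^2$ term with a bad constant, so I would either keep a small fraction of the negative dissipation to absorb it (which is harmless since we then drop what remains) or, more simply, bound $|\nu\langle \nb w, w\nb\psi\rangle|$ directly by splitting as $\tfrac14\nu\|\psi^{1/2}\nb w\|^2$ plus $C\nu r^{-2}\|w\|^2$ using $|\nb\psi|^2/\psi \lesssim r^{-2}$ (valid for the standard mollified partition of unity constructed above, where $\psi^{-1/2}|\nb\psi|$ stays bounded by $r^{-1}$ up to a constant depending on $\phi_1$), folding the constant into $c_\psi$. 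Everything else is a direct application of Ladyzhenskaya, Young, and the already-established global and Dirichlet-quotient bounds.
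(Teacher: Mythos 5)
Your overall strategy is the paper's: start from \eqref{eq:le}, bound the nudging term by $\mu C_I\|w\|_{L^2}^2$ via \eqref{ineq.L2bounded}, feed the nonlinear block through Lemma \ref{lemma.3.1}, and then convert $\|\nb w\|_{L^2}^2$ into $\|w\|_{L^2}^2$ with the Dirichlet quotient bound of Lemma \ref{lemma.Dirichlet} together with $\|w\|_{L^2}\le M^{1/2}$. That part of the proposal matches the paper's proof line for line. (The extra factor $C_L^2$ you flag on the $16G$ term is present in the paper's own intermediate display and is dropped in the final statement there too, so you should not lose sleep over it.)

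The one step that would not go through as you describe it is the viscous cross term. After a single integration by parts you are left with $-\nu\langle \nb w, w\otimes\nb\psi\rangle$, and both of your proposed treatments are shaky: absorbing a piece $\tfrac{\nu}{4}\|\psi^{1/2}\nb w\|_{L^2}^2$ into the negative dissipation fails because $\psi=\psi_i=1-\phi_i$ vanishes precisely on the inner part of $\supp\nb\psi_i$, so the weighted negative term cannot dominate an unweighted gradient there; and the Hardy-type bound $|\nb\psi|^2/\psi\lesssim r^{-2}$ is not among the listed properties of the partition of unity and does not follow from $|\nb\phi_1|\lesssim r^{-1}$ alone (it requires $\psi$ to vanish at least quadratically at its zero set). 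The paper sidesteps all of this by exploiting the exact structure of the cross term: $\nb w : (w\otimes\nb\psi)=\tfrac12\nb(|w|^2)\cdot\nb\psi$, so a second integration by parts gives
\[
\int \Delta w\cdot w\,\psi\,dx=-\int|\nb w|^2\psi\,dx+\frac12\int|w|^2\Delta\psi\,dx,
\]
after which the first term is discarded (good sign) and the second is bounded by $c_\psi\nu r^{-2}\|w\|_{L^2}^2$ using the listed property $|\Delta\phi_1|\lesssim r^{-2}$; this is what produces the stated coefficient $c_\psi\tfrac{\nu}{2r^2}$ exactly. A workable alternative within your framework would be to dump the leftover $\e\nu\|\nb w\|_{L^2}^2$ into the Dirichlet-quotient conversion along with the nonlinear term, but that changes the constants; the double integration by parts is the clean fix.
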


\begin{proof} 
We seek an upper bound for $\frac d {dt} \| w\psi^{1/2} \|_{L^2}^2$ where $\psi\in \{\psi_i \}_{i=1}^N$ is fixed.  Our starting point is the local energy equality \eqref{eq:le}.
We have, after integrating by parts,
\[
\int \Delta w w \psi \,dx = - \int |\nb w|^2\psi\,dx  +  \frac 1 2 \int w^2 \Delta \psi\,dx.
\]
The first term on the right-hand side has a good sign and is dropped.
Plainly \[\nu \int w^2 \Delta \psi\,dx \leq c_\psi \nu r^{-2} \|w\|_{L^2}^2.\]
We also have  
\[
\mu | \langle \mathbb P I_{h,t}w,w\psi \rangle|\leq \mu C_I \|w\|_{L^2}^2.
\]

From these estimates and Lemma \ref{lemma.3.1} we obtain  
\EQN{ 
\frac d {dt} \| w\psi^{1/2} \|_{L^2}^2  
&\leq   \big(c_\psi\frac \nu {2r^2} +\mu C_I\big) \|w\|_{L^2}^2+  C_L^2 (\|w\|_{L^2} +4\nu G)  \|\nb w\|_{L^2}^2
\\&\leq \bigg[ c_\psi\frac \nu {2r^2}  +\mu C_I +  C_L^2 (M^{1/2} +4\nu G) \mathcal Q \bigg] \|w\|_{L^2}^2
\\&\leq \bigg[ c_\psi\frac \nu {2r^2}+\mu C_I  + 4 C_L^2 M^{1/2}   \frac \mu \nu C_I^2 e^{C_L^4(G^2+2)} +16 G \mu C_I^2 e^{C_L^4(G^2+2)}   \bigg] \|w\|_{L^2}^2,
}
where we used Lemma \ref{lemma.Dirichlet}.

\end{proof}

We are now ready to state our main observation about dominant and active regions, namely that, starting at a scenario 1 time, a dominant region remains active for a short period of time $\tau_C$ which can be quantified.

\begin{lemma} \label{lemma:tauC} Fix $c_0>1$.  Let $\mu\geq 1$ satisfy  \eqref{C4} and $h$ satisfy \eqref{C2} and \eqref{C7} below.
 Assume that $t_1$ is a scenario 1 time.
    Assume also that, for some $M>1$ we have 
    \[
 \|w(t_1)\|_{L^2}^2 \leq M.   
    \]
 Take $\tau_C>0$ to  be the minimum of the upper bounds in \eqref{C5}, \eqref{C6} and\eqref{C7} below (note that $\tau_C \sim \mu^{-1}$).
 If a region $\Om_i$ is dominant at time $t_1$ then $\Om_i$ is active for all times in $[t_1,t_1+\tau_C]$. It follows that 
\[
\int |w(t_2)|^2 \chi_i \,dx \geq \frac 1 {c_0 N^2} \| w\|_{L^2}^2 (t_2).
\]
 \end{lemma}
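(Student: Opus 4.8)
The plan is to show that the $\psi_i$-localized energy $\|w(t)\psi_i^{1/2}\|_{L^2}^2$ cannot grow too much relative to $\|w(t)\|_{L^2}^2$ over a time interval of length $\sim \mu^{-1}$, so that codominance at $t_1$ persists (in the weaker ``coactive'' form) throughout $[t_1,t_1+\tau_C]$; by the stated equivalence ``$\Om_i$ active $\iff$ $\Om_i^c$ coactive,'' this gives the claim. First I would take $\psi=\psi_i$ in Lemma \ref{lemma:local.bound.time.d}. Since $t_1$ is a scenario 1 time and $\|w(t_1)\|_{L^2}^2\le M$, I need a bound $\sup_{t_1<t<t_1+\tau_{\mathcal Q}}\|w(t)\|_{L^2}^2\le M'$ for some explicit $M'$ comparable to $M$: this follows from the energy bound \eqref{ineq:EnergyBoundw}, $\|w(t)\|_{L^2}^2\le \|w(t_1)\|_{L^2}^2 e^{C_L^4\nu\la_1 G^2(t-t_1)}$, together with \eqref{C3}/\eqref{C4} giving $\nu\la_1 G^2(t-t_1)\le 1$, so $M' = M e^{C_L^4}$ works. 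Plugging this into Lemma \ref{lemma:local.bound.time.d} yields a differential inequality of the shape $\frac{d}{dt}\|w\psi_i^{1/2}\|_{L^2}^2 \le K\,\|w\|_{L^2}^2$ with $K = K(\mu,\nu,r,G,C_L,C_I,M,\phi_1) \sim \mu$ (after using $r=\ell/4$, $M\ge 1$, $\mu\ge 1$, and $\nu\la_1 G^2\le\mu$ to absorb the $\nu/r^2$ term into a constant times $\mu$).

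Next I would integrate from $t_1$ to $t_2\in[t_1,t_1+\tau_C]$:
\EQN{
\|w(t_2)\psi_i^{1/2}\|_{L^2}^2 \le \|w(t_1)\psi_i^{1/2}\|_{L^2}^2 + K\int_{t_1}^{t_2}\|w(s)\|_{L^2}^2\,ds \le \|w(t_1)\psi_i^{1/2}\|_{L^2}^2 + K(t_2-t_1)\,M e^{C_L^4}.
}
Using that $\Om_i$ is dominant at $t_1$, i.e. $\int|w(t_1)|^2\phi_i\,dx \ge \frac1{N^2}\|w(t_1)\|_{L^2}^2$, equivalently $\int|w(t_1)|^2\psi_i\,dx \le (1-\frac1{N^2})\|w(t_1)\|_{L^2}^2$, I get an upper bound on $\|w(t_1)\psi_i^{1/2}\|_{L^2}^2$. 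On the other hand, I need a lower bound $\|w(t_2)\|_{L^2}^2 \ge \frac12\|w(t_1)\|_{L^2}^2$, which is exactly \eqref{ineq:scenario1energy} from the proof of Lemma \ref{lemma.Dirichlet}, valid as long as $t_2-t_1\le \tau_{\mathcal Q}$ (this is one of the constraints folded into $\tau_C$). Combining, to conclude coactivity at $t_2$ it suffices to have
\EQN{
\Big(1-\tfrac1{N^2}\Big)\|w(t_1)\|_{L^2}^2 + K(t_2-t_1)M e^{C_L^4} \le \Big(1-\tfrac1{c_0N^2}\Big)\|w(t_2)\|_{L^2}^2,
}
and since $\|w(t_2)\|_{L^2}^2 \ge \frac12\|w(t_1)\|_{L^2}^2$ while also $\|w(t_2)\|_{L^2}^2 \le M e^{C_L^4}$... but wait, here lies the subtlety: the growth term $K(t_2-t_1)M e^{C_L^4}$ is bounded in terms of $M$, not in terms of $\|w(t_1)\|_{L^2}^2$, so this argument only closes if $\|w(t_1)\|_{L^2}^2$ is itself comparable to $M$.

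This is the main obstacle, and the way around it (which I expect is what the authors do, and what forces the $\tau_C\sim \mu^{-1}e^{-2C_L^4(G^2+2)}$ scaling) is to re-examine whether $M$ can legitimately be replaced by $\|w(t_1)\|_{L^2}^2$ throughout. Indeed, Lemma \ref{lemma:local.bound.time.d} as stated carries the factor $M^{1/2}$ only in one term ($C_L^2 M^{1/2}\cdot$ quotient), but the Dirichlet quotient bound already produces $\mathcal Q \lesssim \frac\mu\nu e^{C_L^4(G^2+2)}$, so $\|\nb w\|_{L^2}^2 \lesssim \frac\mu\nu e^{C_L^4(G^2+2)}\|w\|_{L^2}^2$ pointwise in $t$; thus every term on the right of Lemma \ref{lemma:local.bound.time.d} is in fact a constant times $\|w(t)\|_{L^2}^2$ with the constant $\sim \mu e^{C_L^4(G^2+2)}$ when $M\lesssim \|w(t_1)\|_{L^2}^2$ — and using \eqref{ineq:scenario1energy} together with \eqref{ineq:EnergyBoundw} one has $\|w(t)\|_{L^2}^2 \le e^{C_L^4}\|w(t_1)\|_{L^2}^2 \le 2e^{C_L^4}\|w(t_2)\|_{L^2}^2$ on the whole interval. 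So I would rewrite the integrated inequality with $\|w(s)\|_{L^2}^2 \le 2e^{C_L^4}\|w(t_1)\|_{L^2}^2$ inside the integral, obtaining
\EQN{
\|w(t_2)\psi_i^{1/2}\|_{L^2}^2 \le \Big(1-\tfrac1{N^2}\Big)\|w(t_1)\|_{L^2}^2 + 2e^{C_L^4}K(t_2-t_1)\|w(t_1)\|_{L^2}^2,
}
with $K\sim \mu e^{C_L^4(G^2+2)}$ (now $M$-free). Then I would choose $\tau_C$ — simultaneously respecting \eqref{C5}, \eqref{C6} (these being the $\tau_{\mathcal Q}$ constraint from Lemma \ref{lemma.Dirichlet} and the constraint \eqref{C3}) and a new constraint \eqref{C7} — so small that $2e^{C_L^4}K\tau_C \le \frac12(\frac1{N^2}-\frac1{c_0 N^2})$; since the right side of the coactivity target is $(1-\frac1{c_0N^2})\|w(t_2)\|_{L^2}^2 \ge \frac12(1-\frac1{c_0N^2})\|w(t_1)\|_{L^2}^2$... hmm, one must be a little careful to trade the $\frac12$ from \eqref{ineq:scenario1energy} correctly; concretely I would instead directly verify $\int|w(t_2)|^2\psi_i\,dx < (1-\frac1{c_0N^2})\|w(t_2)\|_{L^2}^2$ by writing $\int|w(t_2)|^2\psi_i \le \|w(t_2)\psi_i^{1/2}\|_{L^2}^2$, bounding the latter by the displayed expression in $\|w(t_1)\|_{L^2}^2$, and bounding $\|w(t_2)\|_{L^2}^2$ below by $\frac12\|w(t_1)\|_{L^2}^2$ — which reduces the inequality to $(1-\frac1{N^2}) + 2e^{C_L^4}K\tau_C \le \frac12(1-\frac1{c_0N^2})$; this \emph{fails} for large $N$, so in fact the cleaner route is not to use the crude $\frac12$ but to observe that \eqref{ineq:scenario1energy} can be sharpened on a sufficiently short interval to $\|w(t_2)\|_{L^2}^2 \ge (1-\epsilon)\|w(t_1)\|_{L^2}^2$ with $\epsilon$ as small as we like at the cost of shrinking $\tau_C$ (this is immediate from \eqref{ineq.aaa}). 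With $\epsilon$ chosen so that $\frac{1-1/N^2}{1-\epsilon} < 1 - \frac1{c_0 N^2} - \delta$ for a fixed $\delta>0$ — possible precisely because dominance gives $1-\frac1{N^2} < 1-\frac1{c_0N^2}$ strictly — and then $\tau_C$ further shrunk so that $2e^{C_L^4}K\tau_C/(1-\epsilon) < \delta$, the coactivity inequality at $t_2$ holds. Hence $\Om_i^c$ is coactive on $[t_1,t_1+\tau_C]$, so $\Om_i$ is active there, and since $\chi_i \ge \phi_i$ the final displayed inequality $\int|w(t_2)|^2\chi_i\,dx \ge \frac1{c_0N^2}\|w\|_{L^2}^2(t_2)$ follows. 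The bookkeeping of how $\epsilon$, $\delta$, the three constraints \eqref{C5}–\eqref{C7}, and the hidden constant in $\tau_C\sim\mu^{-1}e^{-2C_L^4(G^2+2)}$ interlock is the only genuinely delicate part; everything else is substitution into Lemmas \ref{lemma.Dirichlet} and \ref{lemma:local.bound.time.d}.
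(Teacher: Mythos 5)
Your proposal is correct and, after the self-corrections, lands on essentially the same argument as the paper: bound the growth of $\|w\psi_i^{1/2}\|_{L^2}^2$ via Lemma \ref{lemma:local.bound.time.d} and the Dirichlet-quotient bound, replace the crude factor $\tfrac12$ from \eqref{ineq:scenario1energy} by the sharper $(1-\epsilon)$ lower bound coming from \eqref{ineq.aaa}, and exploit the strict gap between $1-N^{-2}$ and $1-(c_0N^2)^{-1}$; your $\epsilon$ and $\delta$ play exactly the roles of the paper's \eqref{C7} and the parameter $\ga$ in \eqref{def:gamma}/\eqref{C6}. The only cosmetic difference is that the paper keeps the $M^{1/2}$ dependence inside the constant $K$ (so $\tau_C$ depends on $M$, cf.\ Remark \ref{remark1}) rather than eliminating it.
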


\begin{remark}\label{remark1}
Above $\tau_C$ depends on $M=M(t_1)$. In principle $\tau_C$ could collapse as time passes since \eqref{ineq:EnergyBoundw} allows for the growth of the energy of $w$.
In our application, we will iteratively show that the energy is decaying exponentially as $t_1$ grows. So, this dependence on the energy will not lead to $\tau_C\to 0$ when we iterate to extend our result to all times.
\end{remark}

\begin{proof}

Assume $t_1$ is a scenario 1 time and without loss of generality take $\Om_i=\Om_1$ to be dominant at time $t_1$.
From {Lemma \ref{lemma.Dirichlet}}, we have that $\mathcal Q(t)\leq 4 C_I^2 \frac \mu \nu e^{C_L^4(G^2+2)}$ on $[t_1,t_1+\tau_{\mathcal Q}]$.   
Then, by the mean value theorem, \eqref{ineq:EnergyBoundw} and Lemma \ref{lemma:local.bound.time.d}, we have
\EQ{\label{def.constantTilde}
&\| w \psi_1^{1/2}\|_{L^2}^2 (t_2)-\| w \psi_1^{1/2}\|_{L^2}^2 (t_1) \leq (t_2-t_1) \sup_{t_1<t<t_2} \frac d {dt} \|w\psi_1^{1/2}\|_{L^2}^2
\\&\leq (t_2-t_1)  \bigg[ c_\psi\frac \nu {2r^2}+\mu C_I  + 4  C_L^2 M^{1/2}   \frac \mu \nu  C_I^2 e^{C_L^4(G^2+2)} +16 G \mu C_I^2 e^{C_L^4(G^2+2)}   \bigg] \sup_{t_1<t<t_2} \|w(t)\|_{L^2}^2
\\&\leq (t_2-t_1)  {\bigg[ c_\psi\frac \nu {2r^2} +\mu C_I + 4 C_L^2 M^{1/2}   \frac \mu \nu C_I^2 e^{C_L^4(G^2+2)} +16 G \mu C_I^2 e^{C_L^4(G^2+2)}  \bigg]e^{C_L^4 \nu \la_1 G^2(t_2-t_1) }}
\\&\quad \cdot \|w(t_1)\|_{L^2}^2
\\&\leq (t_2-t_1) \underbrace{\bigg[ c_\psi\frac \nu {2r^2}+\mu C_I  + 4 C_L^2 M^{1/2}    \frac \mu \nu C_I^2 e^{C_L^4(G^2+2)} +16 G \mu C_I^2 e^{C_L^4(G^2+2)}   \bigg]e^{C_L^4}}_{=:\mu K}
\\&\quad \cdot \|w(t_1)\|_{L^2}^2,
} 
where we assumed   $t_2-t_1<\tau_C$ where 
\begin{equation} \tag{C5}\label{C5}
{\tau_C\leq \tau_{\mathcal Q},}
\end{equation}
as this implies $\tau_C  \leq \mu^{-1}$.
We additionally require
\begin{equation}\tag{C6}\label{C6}
{ \tau_C 
\leq \frac{\ga}{\mu K}\bigg( 1-\frac {1} {N^2}\bigg),}
\end{equation}
where
\EQ{\label{def:gamma}
\ga=\ga(N,c_0):=\frac 1 2 \bigg(\frac {1-(\sqrt{c_0}N)^{-2}} {1- N^{-2}} -1 \bigg)>0.
}
Then, since $\Om_1^c$ is codominant at time $t_1$, we get from \eqref{def.constantTilde} that
\EQN{
\| w \psi_1^{1/2} \|_{L^2}^2 (t_2)
&\leq \| w \psi_1^{1/2}\|_{L^2}^2(t_1)   + \ga \bigg(1-\frac {1} {N^2}\bigg) \|w(t_1)\|_{L^2}^2
\\&\leq (1+\gamma )\bigg(1-\frac 1 {N^2}\bigg)  \|w(t_1)\|_{L^2}^2.
}
Note that $\ga$ is chosen so that the above prefactor
is the midpoint between 
$
1-N^{-2}$ and $1-(\sqrt {c_0}N)^{-2}
$.
We next guarantee
\EQ{\label{ineq:goal}
(1+\gamma )\bigg(1-\frac 1 {N^2}\bigg) \|w(t_1)\|_{L^2}^2 \leq \bigg( 1 -\frac 1 {c_0N^2} \bigg)  \|w(t_2)\|_{L^2}^2,
}
again by controlling $\tau_C$.
If $\| w(t_1)\|_{L^2}^2 \leq \| w(t_2)\|_{L^2}^2$, then we are done by our choice of $\ga$ and $\tau_C$ does not need to be updated.  
Otherwise, noting that   \eqref{ineq.aaa} and $t_2-t_1<\tau_C$ imply
\EQN{
\|w(t_1)\|_{L^2}^2< 
\bigg(1-  \tau_C \mu  
   (8C_I^2 +2C_L^4 +2C_I)e^{C_L^4 (G^2+1)}  \bigg)
\|w(t_2)\|_{L^2}^2,
}
we see that \eqref{ineq:goal} is met provided
\[
{(1+\gamma )\bigg(1-\frac 1 {N^2}\bigg)  \bigg(1-  \tau_C \mu    (8C_I^2 +2C_L^4 +2C_I)e^{C_L^4 (G^2+1)} \bigg)\bigg)^{-1}\leq\bigg(1 - \frac 1{c_0N^2}\bigg)},
\]
which, in turn, holds if
\begin{equation}\tag{C7}\label{C7}
{
\tau_C \leq \frac 1 {\mu    (8C_I^2 +2C_L^4 +2C_I)e^{C_L^4 (G^2+1)} \bigg)}\bigg( 1- \frac {(1+\ga)(1-N^{-2})} {1-(\sqrt{c_0}N)^{-2}}\bigg).
}
\end{equation}
We therefore take $\tau_C$ to be the minimum of the quantities listed above.

We have thus shown
\[
\| w \psi_1^{1/2}(t_2)\|_{L^2}^2  <  \bigg( 1 -\frac 1 {c_0N^2} \bigg) \| w(t_2)\|_{L^2}^2 ,
\]
that is, $\Om_1^c$ is coactive at time $t_2$. 
Using the fact that $\phi_1+\psi_1 =1$, we have
\[
\|w\phi_1^{1/2}(t_2)\|_{L^2}^2\geq \frac 1 {c_0N^2} \| w(t_2)\|_{L^2}^2 ,
\] 
and therefore $\Om_1$ is active at time $t_2$. 

\end{proof}

\section{Mobile data assimilation}\label{sec.da}

\subsection{Short time synchronization}We now show that the data assimilation equation synchronizes regardless of whether we are in scenario 1 or scenario 2.

\begin{lemma}\label{lemma:sync1}
Assume that $t_1$ is a scenario 1 time. Let $c_*>0$ and $c_0>1$ be given. Then, choosing $\mu$ to satisfy  \eqref{C4}, \eqref{C8} and \eqref{C9},
and $h$ to satisfy \eqref{C1} and \eqref{C2},
and letting $\tau_C$ be as in Lemma \ref{lemma:tauC},
it follows that
\[
 \|w(t_1+\tau_C)\|_{L^2}^2 \leq  \| w(t_1) \|_{L^2}^2e^{-c_* \tau_C}.
\]
\end{lemma}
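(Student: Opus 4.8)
The plan is to run a differential inequality for $\|w\|_{L^2}^2$ over the interval $[t_1,t_1+\tau_C]$ that exploits the fact that, since $t_1$ is a scenario 1 time, Lemma \ref{lemma:tauC} tells us that throughout this interval there is an active region---in fact, the region $\Om_{\iota(t)}$ visited by the observer must be active for \emph{some} subinterval of time during the cycle, because $\iota$ cycles through all $N^2$ regions in time $\tau_C$ and spends $\tau_C/N^2$ units of time in each. The core idea is that, when the observer is over a region where the error mass concentrates, the nudging term $\mu\int |w|^2\chi_{\iota(t)}\,dx$ dominates and produces genuine decay; and when it is not over such a region, the energy estimate \eqref{ineq:EnergyBoundw} shows $\|w\|_{L^2}^2$ can grow by at most a factor $e^{C_L^4\nu\la_1 G^2(t-t_1)}$, which is controlled because $\tau_C\sim\mu^{-1}$.

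First I would write the energy identity for $w$ tested against $w$ (not localized), isolating the good term $-\mu\int|w|^2\chi_{\iota(t)}\,dx$, and bound the nonlinear and dissipative terms as in the ``Energy estimate for $w$'' subsection but keeping the nudging term, so that
\[
\frac{d}{dt}\|w\|_{L^2}^2 \leq C_L^4\nu\la_1 G^2\|w\|_{L^2}^2 - 2\mu\int |w|^2\chi_{\iota(t)}\,dx + (\text{lower-order interpolation error, absorbed via \eqref{C1}, \eqref{C2}}).
\]
Next, I would split $[t_1,t_1+\tau_C]$ according to whether the active region identified by Lemma \ref{lemma:tauC} (the one that is dominant at $t_1$, say $\Om_{i_0}$) is being observed, i.e.\ whether $\iota(t)=i_0$. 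By the cycling structure there is a subinterval $J\subset[t_1,t_1+\tau_C]$ of length $\tau_C/N^2$ on which $\iota(t)=i_0$, and on $J$ activeness gives $\int|w|^2\chi_{i_0}\,dx \geq \frac{1}{c_0 N^2}\|w\|_{L^2}^2$, so the differential inequality reads $\frac{d}{dt}\|w\|_{L^2}^2 \leq (C_L^4\nu\la_1 G^2 - \frac{2\mu}{c_0 N^2})\|w\|_{L^2}^2$ there. On $[t_1,t_1+\tau_C]\setminus J$ we only have $\frac{d}{dt}\|w\|_{L^2}^2\leq C_L^4\nu\la_1 G^2\|w\|_{L^2}^2$. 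Integrating over the whole interval and collecting, $\|w(t_1+\tau_C)\|_{L^2}^2 \leq \|w(t_1)\|_{L^2}^2\exp\!\big(C_L^4\nu\la_1 G^2\tau_C - \frac{2\mu}{c_0 N^2}\cdot\frac{\tau_C}{N^2}\big)$ (being slightly careful that the favorable subinterval $J$ may not be contiguous or at the end, but since the non-favorable growth rate is a fixed constant, the exact placement of $J$ only costs a bounded multiplicative constant that can be folded into the comparison with $e^{-c_*\tau_C}$). One then picks $\mu$ large enough---this is where conditions \eqref{C8}, \eqref{C9} enter---so that $\frac{2\mu}{c_0 N^4} - C_L^4\nu\la_1 G^2 \geq c_*$, giving the claimed bound $\|w(t_1+\tau_C)\|_{L^2}^2\leq\|w(t_1)\|_{L^2}^2 e^{-c_*\tau_C}$.

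The main obstacle I anticipate is \emph{transferring} the activeness information, which Lemma \ref{lemma:tauC} phrases in terms of the cut-off $\chi_i$ (equivalently $\phi_i$, $\psi_i$), into a usable lower bound for $\int|w|^2\chi_{\iota(t)}\,dx$ that is valid \emph{at the times when the observer is actually over that region}, and not just at $t_1$ or $t_2$. Lemma \ref{lemma:tauC} gives activeness of $\Om_{i_0}$ at every $t_2\in[t_1,t_1+\tau_C]$, so this should be fine, but one must also reconcile the support of $\chi_{\iota(t)}=\chi_{\td\Om_{\iota(t)}}$ (which is the slightly enlarged cell $\td\Om_i$) with $\chi_i$ used in the definition of active---the remark after the definition of active, namely $\chi_i\geq\phi_i$, exactly handles this. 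A secondary subtlety is bookkeeping the interpolation-error terms $\mu\langle I_{h,t}w - w\chi_{\iota(t)},w\rangle$ and $\mu\langle I_{h,t}w,w\rangle$ consistently with the sign of the good term; these are routine given \eqref{Poincare}, \eqref{ineq.L2bounded}, \eqref{C1} and \eqref{C2}, but the constants must be tracked so that the favorable coefficient $\frac{2\mu}{c_0 N^2}$ survives with, say, a factor $\frac12$ to spare. I would also need to verify that $\tau_C\sim\mu^{-1}$ (from Lemma \ref{lemma:tauC}) makes the growth term $C_L^4\nu\la_1 G^2\tau_C$ a genuinely bounded quantity independent of $\mu$, which it is, so that the net exponent is driven to $-c_*\tau_C$ by choosing $\mu$ large.
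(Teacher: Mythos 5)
Your proposal is correct and follows essentially the same route as the paper: decompose $[t_1,t_1+\tau_C]$ according to whether the observer sits over the region that Lemma \ref{lemma:tauC} guarantees stays active, use the nudging term for exponential decay on that subinterval of length $\tau_C/N^2$ and \eqref{ineq:EnergyBoundw} for controlled growth elsewhere, then balance the exponents via \eqref{C8}--\eqref{C9}. The subtleties you flag (transferring activeness via $\chi_i\geq\phi_i$, absorbing the interpolation error via \eqref{C1}, and the placement of the favorable subinterval) are handled exactly as you anticipate.
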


The idea behind this lemma is that, as the observability window cycles through all regions over the interval $[t_1,t_1+\tau_C]$, it remains in an active interval for $\tau_C/N^2$ units of time. When this is the case, synchronization is driven by the nudging term at an exponential rate. Over the remaining times, $\|w\|_{L^2}^2$ might grow, but not enough to overcome the convergence during synchronization.

\begin{proof}
Suppose that $\Om_i$ is the dominant region at time $t_1$. Then, by Lemma \ref{lemma:tauC}, which requires $\mu$ satisfy \eqref{C2} and \eqref{C4} and $h$ satisfies \eqref{C2}, it is active on $[t_1,t_1+\tau_C]$ and, therefore,
\[
\int |w|^2 \chi_i \,dx \geq \frac 1 {c_0 N^2} \int |w|^2\,dx.
\]
There exists a strict and maximal subinterval $I$ of $[t_1,t_1+\tau_C]$ on which the interpolant operator localizes to $\Om_i$. We may decompose $[t_1,t_1+\tau_C]$ into the three intervals $[t_1,t_2]$, $I = [t_2,t_3]$, and $[t_3,t_1+\tau_C]$, where $t_1\leq t_2\leq t_3$. The first or last interval can be degenerate.  Note that $t_3-t_2 = \tau_C N^{-2}$. On $I$, since $\Om_i$ is active,  by the Poincar\'e inequality  we have  
\EQ{ \label{ineq.energy.se}
\frac 1 2\frac d {dt} \|w\|_{L^2}^2 +\nu \|\nb w\|_{L^2}^2
&=-\int (w\cdot \nb u)\cdot w\,dx - \mu (I_{h,t} (w   )-w\chi_{\iota(t)}, w ) - \mu \int |w|^2\chi_{\iota(t)}\,dx
\\&\leq C_L^2\|\nb u\|_{L^2} \| w\|_{L^2} \|\nb w\|_{L^2} + \mu C_Ih \|w\|_{L^2}\|\nb w\|_{L^2} - \frac \mu  {c_0 N^2} \int |w|^2 \,dx
\\&\leq  \frac 1 \nu C_L^4  \|\nb u\|_{L^2}^2  \|w\|_{L^2}^2 +  \frac \nu 2 \|\nb w\|_{L^2}^2 - \frac \mu  {c_0 N^2} \int |w|^2 \,dx,
}
where we are using \eqref{C1}.
Then 
\EQN{ 
\frac d {dt} \|w\|_{L^2}^2  & \leq  2(4C_L^4 \nu \la_1 G^2 - \frac {\mu} {c_0N^2} )\|w\|_{L^2}^2 
\\&\leq \frac {-\mu} {c_0 N^2}\|w\|_{L^2}^2,
}
provided
\begin{equation} \tag{C8}\label{C8}
{4C_L^4 \nu \la_1 G^2 \leq \frac {\mu} {c_0N^2}  . }
\end{equation}
Hence for $t\in [t_2,t_3]$ we have
\[
\|w (t)\|_{L^2}^2 \leq \|w(t_2)\|_{L^2}^2 e^{- \frac \mu  {c_0 N^2} (t-t_2)}.
\]
In particular, 
\[
\| w(t_3)\|_{L^2}^2  \leq \|w(t_2)\|_{L^2}^2 e^{- \frac \mu  {c_0 N^2} \frac {\tau_C} {N^2}}.
\]
On the other hand, using \eqref{ineq:EnergyBoundw}, for $t\in [t_i,t_{i+1}]$ where $i=1,3$ we have 
\[
\|w(t)\|_{L^2}^2 \leq \|w(t_i)\|_{L^2}^2e^{4C_L^4\nu\la_1G^2(t-t_i) }.
\]
Hence, 
\EQN{
\|w(t_1+\tau_C)\|_{L^2}^2  &\leq
 \|w(t_3)\|_{L^2}^2e^{4C_L^4\nu\la_1G^2(t_1+\tau_C-t_3)}
\\&\leq 
 \|w(t_2)\|_{L^2}^2e^{4C_L^4\nu\la_1G^2(t_1+t_C-t_i)-  \frac \mu  {c_0 N^2} \frac {\tau_C} {N^2}}
\\&\leq 
\|w(t_1)\|_{L^2}^2e^{4C_L^4\nu\la_1G^2(t_1+\tau_C-t_3 +t_2-t_1)  - \frac \mu  {c_0 N^2} \frac {\tau_C} {N^2}  }
\\&\leq \|w(t_1)\|_{L^2}^2e^{\tau_C( 4C_L^4\nu\la_1G^2- \frac \mu  {c_0 N^4} \big)} 
}
Provided
\begin{equation}\label{C9}\tag{C9}
{
 4C_L^4\nu\la_1G^2 - \frac \mu  {c_0 N^4} \leq -c_*, 
}
\end{equation}
we have 
\EQN{
\|w(t_1+\tau_C)\|_{L^2}^2   \leq e^{ -c_* {\tau_C}  }   \|w(t_1)\|_{L^2}^2.
}
\end{proof}

\begin{lemma}\label{lemma:sync2}
Suppose that scenario 2 holds on $[t_1,t_2]$. Let $c_*>0$ and $c_0>1$ be given.  Then for all $t\in [t_1,t_2]$,
assuming $\mu$ satisfies \eqref{C9},
and $h$ satisfies \eqref{C1}
we have
\[
\|w(t)\|_{L^2}^2 \leq \|w(t_1)\|_{L^2}^2 e^{-c_*(t-t_1)}.
\]
\end{lemma}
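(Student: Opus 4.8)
The plan is to exploit the defining inequality of Scenario 2, namely $\mathcal Q(t) = \|\nb w\|_{L^2}^2/\|w\|_{L^2}^2 > \mu/\nu$, so that the viscous dissipation term alone dominates all the other terms in the energy balance, making the nudging term superfluous. I would start from the basic energy identity for $w=u-v$ solving \eqref{eq:difference}, which reads
\EQ{\label{plan.energy}
\frac 1 2 \frac d {dt} \|w\|_{L^2}^2 + \nu\|\nb w\|_{L^2}^2 = -\int (w\cdot\nb u)\cdot w\,dx - \mu\langle I_{h,t}(w)-w\chi_{\iota(t)},w\rangle - \mu\int |w|^2\chi_{\iota(t)}\,dx.
}
The last term has a good sign and is dropped. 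For the first term I would use Ladyzhenskaya and \eqref{inclusions2} exactly as in the energy estimate for $w$ in Section \ref{sec.global}, bounding it by $\frac{C_L^4}{2\nu}\|\nb u\|_{L^2}^2\|w\|_{L^2}^2 + \frac\nu 2\|\nb w\|_{L^2}^2 \le \frac{C_L^4 \nu\la_1 G^2}{2}\|w\|_{L^2}^2 + \frac\nu 2\|\nb w\|_{L^2}^2$. For the interpolant error term I would use \eqref{Poincare} and Young's inequality, $\mu|\langle I_{h,t}(w)-w\chi_{\iota(t)},w\rangle| \le C_I\mu h\|\nb w\|_{L^2}\|w\|_{L^2} \le C_I\mu h\la_1^{-1/2}\|\nb w\|_{L^2}^2$, which is $\le \frac\nu 4\|\nb w\|_{L^2}^2$ by \eqref{C1}.

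Collecting these, I get $\frac d{dt}\|w\|_{L^2}^2 + \nu\|\nb w\|_{L^2}^2 \le C_L^4\nu\la_1 G^2\|w\|_{L^2}^2$ (after absorbing the $\nu/2+\nu/4$ of gradient terms into the $\nu\|\nb w\|_{L^2}^2$ on the left, leaving at least $\nu/4\|\nb w\|_{L^2}^2$; one can be a little more careful with constants but the structure is what matters). Now I invoke the Scenario 2 hypothesis: on $[t_1,t_2]$ we have $\|\nb w\|_{L^2}^2 > (\mu/\nu)\|w\|_{L^2}^2$, so $\nu\|\nb w\|_{L^2}^2 > \mu\|w\|_{L^2}^2$, hence (keeping the full $\nu\|\nb w\|_{L^2}^2$ or even a fraction of it on the left)
\EQ{\label{plan.differential}
\frac d{dt}\|w\|_{L^2}^2 \le (C_L^4\nu\la_1 G^2 - \mu)\|w\|_{L^2}^2.
}
By \eqref{C9}, $C_L^4\nu\la_1 G^2 - \mu/(c_0 N^4) \le -c_*$; since $c_0 N^4 \ge 1$ we get $C_L^4\nu\la_1 G^2 - \mu \le C_L^4\nu\la_1 G^2 - \mu/(c_0 N^4) \le -c_*$, so the right-hand side of \eqref{plan.differential} is $\le -c_*\|w\|_{L^2}^2$. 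Gr\"onwall's inequality (the constant-coefficient version stated in Section \ref{sec.global}) then yields $\|w(t)\|_{L^2}^2 \le \|w(t_1)\|_{L^2}^2 e^{-c_*(t-t_1)}$ for all $t\in[t_1,t_2]$.

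I do not anticipate a genuine obstacle here; this is the "easy case" where dissipation does all the work. The only place requiring a little care is bookkeeping the gradient-term constants: one must make sure that after using \eqref{C1} on the interpolant error term there is still enough of the $\nu\|\nb w\|_{L^2}^2$ left (at least a fixed fraction) to convert via the Scenario 2 bound into a term that dominates $C_L^4\nu\la_1 G^2\|w\|_{L^2}^2$ and leaves a clean $-c_*$. Concretely, even keeping just $\frac\nu 4\|\nb w\|_{L^2}^2$ gives $\frac\nu 4\|\nb w\|_{L^2}^2 > \frac\mu 4\|w\|_{L^2}^2$ on Scenario 2, and then one checks $C_L^4\nu\la_1 G^2 - \mu/4 \le -c_*$ follows from \eqref{C9} (again using $c_0 N^4\ge 4$, or absorbing the factor of $4$ into the constant defining $\mu$ via \eqref{C9}). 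A remark that \eqref{C1} and \eqref{C2} are the only constraints on $h$ used, and \eqref{C9} the only one on $\mu$, makes the hypothesis list match the statement.
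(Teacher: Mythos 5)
Your proposal is correct and follows essentially the same route as the paper: drop the good-sign nudging term, bound the nonlinear term by Ladyzhenskaya and the interpolant error via \eqref{Poincare} and \eqref{C1}, use the Scenario 2 bound $\nu\|\nb w\|_{L^2}^2>\mu\|w\|_{L^2}^2$ to convert the leftover dissipation into decay, and close with \eqref{C9}. Your closing remark correctly handles the only constant-bookkeeping subtlety (a surviving fraction of the gradient term still beats $\mu/(c_0N^4)$ since $N\geq 2$), so there is no gap.
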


\begin{proof}
From  \eqref{ineq.energy.se} and dropping the term with the good sign, we have
\EQ{\label{ineq.no.nudge}\frac 1 2 \frac d {dt} \|w\|_{L^2}^2 +\nu \|\nb w\|_{L^2}^2  &\leq \frac 1 \nu C_L^4  \|\nb u\|_{L^2}^2  \|w\|_{L^2}^2 +  \frac \nu 2 \|\nb w\|_{L^2}^2.
}
It follows that  
\[
\frac d {dt} \|w\|_{L^2}^2 +\mu   \|w\|_{L^2}^2\leq 4 C_L^4 \nu\la_1 G^2 \|w\|_{L^2}^2.
\]
The desired result follows if
\EQN{
 {4C_L^4 \nu \la_1 G^2- \mu  \leq -c_*,}
}
which is implied by \eqref{C9}.

\end{proof}

Note that in \eqref{ineq.no.nudge} we drop the nudging term. Thus, in this lemma we are using diffusion to drive synchronization.

\subsection{Proof of Theorem \ref{theorem}}

\begin{figure}\label{fig2}
\begin{center}
\begin{tikzpicture}
\draw (-5,0) -- (5,0);
\draw (-5,-.4) -- (-5,.4);
\draw (-2,-.4) -- (-2,.4); 
\draw (1,-.4) -- (1,.4); 

\draw [fill=lightgray] (-5,0) -- (-5,.4) -- (-4,.4) -- (-4,0) -- (-5,0);
\draw [fill=lightgray] (-4,0) -- (-4,.4) -- (-3,.4) -- (-3,0) -- (-4,0);
\draw [fill=lightgray] (-3,0) -- (-3,.4) -- (-2,.4) -- (-2,0) -- (-3,0);
\draw [fill=lightgray] (-2,0) -- (-2,.4) -- (-1,.4) -- (-1,0) -- (-2,0);
\draw [fill=lightgray] (.5,0) -- (.5,.4) -- (1.5,.4) -- (1.5,0) -- (.5,0);
\draw [fill=lightgray] (3.5,0) -- (4.5,0) -- (4.5,.4) -- (3.5,.4) -- (3.5,0);

\draw (-2,.4) -- (.5,.4);
\draw (1.5,.4) -- (3.5,.4);
\draw (4.5,.4) -- (5,.4) -- (5,0);
 
 \draw[pattern=north west lines, pattern color=black] (-1,0) rectangle (.5,.4);
  \draw[pattern=north west lines, pattern color=black] (1.5,0) rectangle (3.5,.4);
   \draw[pattern=north west lines, pattern color=black] (4.5,0) rectangle (5,.4);

\draw [fill=gray] (-5,-.4) -- (-2,-.4) -- (-2,0) -- (-5,0) -- (-5,-.4);
\draw [fill=gray] (.5,-.4) -- (1,-.4) -- (1,0) -- (.5,0) -- (.5,-.4);
\draw [fill=gray] (1.1,-.4) -- (1.3,-.4) -- (1.3,0) -- (1.1,0) -- (1.1,-.4);
\draw [fill=gray] (3.5,0) -- (3.7,0) -- (3.7,-.4) -- (3.5,-.4) -- (3.5,0);

\draw (-2,0) -- (.5,0) -- (.5,-.4) -- (-2,-.4) -- (-2,0);
\draw (1,-.4) -- (1.1,-.4);
\draw (1.3,-.4) -- (3.5,-.4);
\draw (3.7,-.4) -- (5,-.4) -- (5,0);

\filldraw[black] (-5,0) circle (2pt);
\filldraw[black] (-4,0) circle (2pt);
\filldraw[black] (-3,0) circle (2pt);
\filldraw[black] (-2,0) circle (2pt);
\filldraw[black] (-1,0) circle (2pt);
\filldraw[black] (.5,0) circle (2pt);
\filldraw[black] (1.5,0) circle (2pt);
\filldraw[black] (3.5,0) circle (2pt);
\filldraw[black] (4.5,0) circle (2pt);

\draw (-2.5,3) node {Use Lemma \ref{lemma:sync1}};
\draw (-2.5,2.5)--(-4.5,.4);
\draw (-2.5,2.5)--(-3.5,.4);
\draw (-2.5,2.5)--(-2.5,.4);
\draw (-2.5,2.5)--(-1.5,.4);
\draw (-2.5,2.5)--(1,.4);
\draw (-2.5,2.5)--(4,.4);

\draw (2.5,3) node {Use Lemma \ref{lemma:sync2}};
\draw (2.5,2.5)--(-.25,.4);
\draw (2.5,2.5)--(2.5,.4);
\draw (2.5,2.5)--(4.75,.4);

\draw (-2.5,-3) node {Scenario 1 times};
\draw (-2.5,-2.5)--(-3.5,-.4);
\draw (-2.5,-2.5)--(.75,-.4);
\draw (-2.5,-2.5)--(1.2,-.4);
\draw (-2.5,-2.5)--(3.6,-.4);

\draw (2.5,-3) node {Scenario 2 times}; 
\draw (2.5,-2.5)--(1.05,-.4);
\draw (2.5,-2.5)--(-.75,-.4);
\draw (2.5,-2.5)--(2.4,-.4);
\draw (2.5,-2.5)--(4.35,-.4);

\draw (6,-2.6) -- (7,-2.6);
\draw (6.5,-3) node {$\tau_C$};
\filldraw[black] (6,-2.6) circle (2pt);
\filldraw[black] (7,-2.6) circle (2pt);
\draw[->,thick] (-5,0) -- (6.2,0);
\draw (6.5,-.4) node {$t$};

\end{tikzpicture}
\caption{The partition scheme for the time axis in the proof of Theorem \ref{theorem}.
The dark gray lower intervals represent scenario 1 times while  scenario 2 times are unfilled. The gray upper intervals, which have length $\tau_C$ and are initiated at scenario 1 times, represent times across which at least one region stays active---see Lemma \ref{lemma:tauC}. On these intervals, nudging drives synchronization---see Lemma \ref{lemma:sync1}. In the hatched upper intervals, which can have any length, dissipation drives synchronization---see Lemma \ref{lemma:sync2}. 
}
\end{center}
\end{figure}
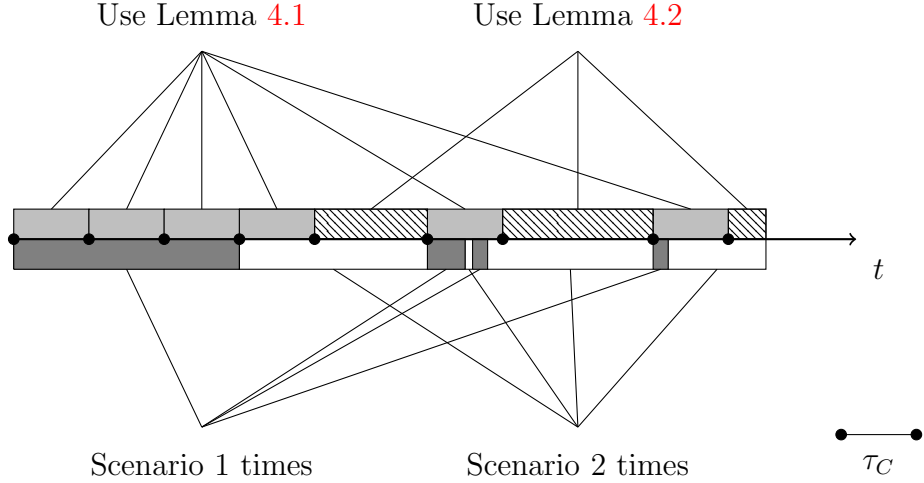

\begin{proof}
Let $w$ be a solution to \eqref{eq:DA} on $\Om\times (0,\I)$ where $v_0=0$ so that $w_0 = u_0$.    Suppose $t=t_0=0$ is a scenario 1 time---if it is not then, by Lemma \ref{lemma:sync2}, there is an interval originating at zero on which $\|w(t)\|_{L^2}^2$ is decreasing exponentially and we re-start this argument at the first scenario 1 time (if no time exists then Lemma \ref{lemma:sync2} applies for all times and we are done).  Let $I=[0,m \tau_C]$ where $m\geq 1$ is the largest integer such that, for all $i=0,\ldots, m-1$ we have $t=i \tau_C$ is a scenario $1$ time. Note that $\|w (i \tau_C)\|_{L^2}$ is a decreasing sequence hence the parameters in Lemma \ref{lemma:sync1}, in particular $\tau_C$, can be chosen uniformly depending on $M=\|u_0\|_{L^2}^2 \leq \nu^2G^2$ (see Remark \ref{remark1}).
Then 
\[
\|w(m \tau_C)\|_{L^2}^2 \leq e^{-c_* m \tau_C} \nu^2G^2.
\]
Additionally $m\tau_C$ is a scenario 2 time. Let $t_1$ be the first scenario 1 time after $m\tau_C$. So, by Lemma \ref{lemma:sync2} we have for all $t\in [m \tau_C , t_1]$,
\[
\|w(t)\|_{L^2}^2 \leq e^{-c_* (t- m\tau_C)} \|w(m\tau_C)\|_{L^2}^2 \leq e^{-c_* (t-m\tau_C) - c_*m\tau_C} \nu^2G^2 = e^{-c_* t }\nu^2G^2.
\]
We can repeat this argument to generate a sequence of times $t_i$ which grows without bound  so that 
\[
\|w(t_i)\|_{L^2}^2 \leq e^{-c_* t_i} \nu^2G^2.
\]

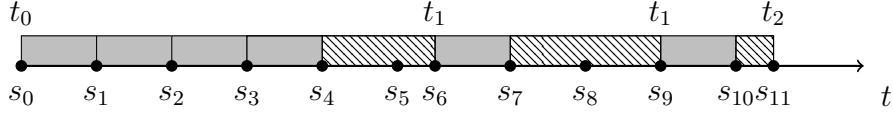
\begin{figure}\label{fig3}
\begin{center}
\begin{tikzpicture}
\draw (-5,0) -- (5,0);

\draw [fill=lightgray] (-5,0) -- (-5,.4) -- (-4,.4) -- (-4,0) -- (-5,0);

\draw [fill=lightgray] (-4,0) -- (-4,.4) -- (-3,.4) -- (-3,0) -- (-4,0);
\draw [fill=lightgray] (-3,0) -- (-3,.4) -- (-2,.4) -- (-2,0) -- (-3,0);
\draw [fill=lightgray] (-2,0) -- (-2,.4) -- (-1,.4) -- (-1,0) -- (-2,0);
\draw [fill=lightgray] (.5,0) -- (.5,.4) -- (1.5,.4) -- (1.5,0) -- (.5,0);
\draw [fill=lightgray] (3.5,0) -- (4.5,0) -- (4.5,.4) -- (3.5,.4) -- (3.5,0);

\draw (-2,.4) -- (.5,.4);
\draw (1.5,.4) -- (3.5,.4);
\draw (4.5,.4) -- (5,.4) -- (5,0);
 
 \draw[pattern=north west lines, pattern color=black] (-1,0) rectangle (.5,.4);
  \draw[pattern=north west lines, pattern color=black] (1.5,0) rectangle (3.5,.4);
   \draw[pattern=north west lines, pattern color=black] (4.5,0) rectangle (5,.4);

\filldraw[black] (-5,0) circle (2pt);
\filldraw[black] (-4,0) circle (2pt);
\filldraw[black] (-3,0) circle (2pt);
\filldraw[black] (-2,0) circle (2pt);
\filldraw[black] (-1,0) circle (2pt);
\filldraw[black] (0,0) circle (2pt);
\filldraw[black] (.5,0) circle (2pt);
\filldraw[black] (1.5,0) circle (2pt);
\filldraw[black] (2.5,0) circle (2pt);
\filldraw[black] (3.5,0) circle (2pt);
\filldraw[black] (4.5,0) circle (2pt);
\filldraw[black] (5,0) circle (2pt);

\draw (-5,.7) node {$t_0$};
\draw (.5,.7) node {$t_1$};
\draw (3.5,.7) node {$t_1$};
\draw (5,.7) node {$t_2$};

\draw (-5, -.4) node {$s_0$};
\draw (-4, -.4) node {$s_1$};
\draw (-3, -.4) node {$s_2$};
\draw (-2, -.4) node {$s_3$};
\draw (-1, -.4) node {$s_4$};
\draw (0, -.4) node {$s_5$};
\draw (.5, -.4) node {$s_6$};
\draw (1.5, -.4) node {$s_7$};
\draw (2.5,-.4) node {$s_8$};
\draw (3.5, -.4) node {$s_9$};
\draw (4.5, -.4) node {$s_{10}$};
\draw (5, -.4) node {$s_{11}$};


\draw[->,thick] (-5,0) -- (6.2,0);
\draw (6.5,-.4) node {$t$};

\end{tikzpicture}
\caption{
Refining the partition as in Figure \ref{fig2}. The new partition is segmented using the points labelled $s_i$. Note that $|s_{i+1}-s_i|\leq \tau_C$.
}
\end{center}
\end{figure}

This decay only holds on a sequence of times. We now extend this to decay for all times $t\geq \tau_C$. We can define a second sequence $s_i$ so that $|s_{i+1}-s_{i}|\leq \tau_C$ and $\{ t_i\}\subset \{s_i\}$, see Figure \ref{fig3}. This sequence also clearly satisfies $\| w(s_i)\|_{L^2}^2 \lesssim e^{-c_*s_i}\nu^2G^2$.
Then for $t\in (s_{i} , s_{i+1})$ we have by \eqref{ineq:EnergyBoundw},
\EQN{
\| w(t)\|_{L^2}^2 &\leq  e^{C_L^4\nu\la_1 G^2(t-s_i)} \|w(s_{i}) \|_{L^2}^2
\\&\leq   e^{C_L^4\nu\la_1 G^2\tau_C} \|w(s_{i}) \|_{L^2}^2
\\&\leq   e^{C_L^4\nu\la_1 G^2\tau_C - c_*s_i} \nu^2G^2.
}
Note that for $\tau_C\leq s_i$ we have $t/2 \leq s_i$.  So,
 \[
\| w(t)\|_{L^2}^2 \leq    e^{C_L^4\nu\la_1 G^2\tau_C - c_*t/2} \nu^2G^2 \leq e^{C_L^4} \nu^2G^2 e^{-c_*t/2},
\]
where we used the fact that $\tau_C\leq \mu^{-1}\leq  (
\nu \la_1 G^2)$ (which is \eqref{C4}).

The condition for $\mu$ in the theorem's statement is obtained from \eqref{C4} and \eqref{C9} while that for $h$ is from \eqref{C1}  and \eqref{C2}. Presently, $\tau_C$ has been defined to be the smallest quantity in the right hand sides of \eqref{C5}, \eqref{C6} and \eqref{C7}. In the theorem's statement, however, we reduce this value for the sake of readability (it is still based on \eqref{C5}, \eqref{C6} and \eqref{C7}).

\end{proof}

\section{Numerical tests}

Our tests are carried out on the NSE in vorticity form
\begin{align}\label{NSEom}
    \frac d {dt} \omega -\nu\Delta \omega + u \cdot \nabla \omega = g\;,\quad  u =\nabla^{\perp}\psi,\quad -\Delta \psi =\omega
\end{align}
where $g=\nabla \times f$, the same time independent force concentrated on the annulus with wave numbers  $10\le |\bk| < 12$, as used in \cite{OT1,OT2, BBJ1}. The analysis has been done in terms of velocity, yet if it is robust, we should in practice see similar effects for vorticity.  
We solve both \eqref{NSEom} and the nudged equation
\begin{align}\label{NSEtom}
    \frac d {dt} \tom -\nu\Delta \tom + \tu \cdot \nabla \tom = g - J_h (\tom-\om) ;,\quad  \tu =\nabla^{\perp}\tpsi,\quad -\Delta \tpsi =\tom
\end{align}
using a fully dealiased pseudospectral code with N=512 modes in each direction over the full physical domain $[0,2\pi]^2$. 

As in \cite{BBJ1}, for all mobile local nudging we use a spectrally filtered interpolating operator $J_h$. We first move from Fourier coefficients to nodal values via an FFT$^{-1}$ applied to $\tom-\om$.  We then use data at only every  $2^p$-th node in each direction, so that $p=1,2,3,4$, corresponds to $h=\pi/128, \pi/64, \pi/32$ and $\pi/16$, respectively. A recursively averaged operator $\interp_p$ depicted in Figure \ref{abcdfig} is used the smoothen the result over the subdomain $D$.   After restricting to the subdomain, we transform back so that
\begin{align*}\label{interpt}
J_h(\tilde\omega-\omega)=\text{FFT} \circ\interp_p\circ \chi_{\tiny D} \circ\text{FFT}^{-1}(\tilde\omega-\omega)\;.
\end{align*}

The viscosity is fixed at $\nu=10^{-4}$, and the force is scaled so that the {(traditional) Grashof number is $$\frac{\|f\|_{L^2}}{\nu^2 \lambda_1}=10^6\;.$$}This results in a chaotic reference solution, which after 25,000 time units starting from zero initial vorticity, is presumed to essentially be on the global attractor.
The time stepper is the third-order Adams-Bashforth method in \cite{OT1,OT2} in which the linear term is handled exactly through an integrating factor.  Unless specified otherwise, the step size is taken to be $\delta t=0.001$.  A larger step $\delta t=0.01$ was found in \cite{OT2} to be sufficient for computing the reference solution, but the new nudging schemes synchronize quickly if the flow is sampled on a finer time scale.  The relaxation parameter is fixed at $\mu=50$ and nudging takes place over a moving subdomain of size $\pi/2 \times \pi/2$.
 
\begin{figure}[ht]
\psfrag{a}{\tiny$a$}
\psfrag{b}{\tiny$b$}
\psfrag{c}{\tiny$c$}
\psfrag{d}{\tiny$d$}
\psfrag{ab}{\tiny$\frac{a+b}{2}$}
\psfrag{bc}{\tiny$\frac{b+c}{2}$}
\psfrag{cd}{\tiny$\frac{c+d}{2}$}
\psfrag{ad}{\tiny$\frac{a+d}{2}$}
\psfrag{abcd}{\tiny$\frac{a+b+c+d}{4}$}
 \centerline{\includegraphics[scale=.5 ]{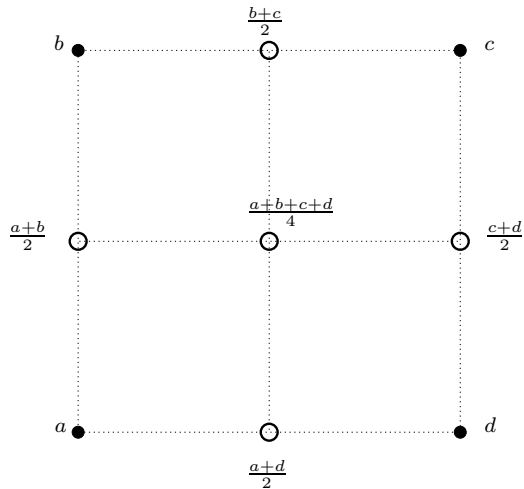}}
\caption{First recursive step of $\interp_p$.  Values of $\tilde\omega_N-\omega_N$ are $a$, $b$, $c$, $d$ at the corners.}
\label{abcdfig}
\end{figure}

\subsection{Periodic movement of nudging subdomain}

We consider two similar movements of the subdomain where the nudging takes place. The lower left corner of the subdomain is determined by periodic functions $n_x(\tau)$, $n_y(\tau)$. In one scheme, these functions are as in \cite{BBJ1}, and depicted in Figure \ref{cornerfig2} (Top). The subdomain would move continuously (if not for discrete time steps), except at the end of the cycle, when it jumps down to the lower left corner. For this reason, we call such schemes {\it discontinuous periodic}. In contrast, the functions $n_x(\tau)$, $n_y(\tau)$ in Figure \ref{cornerfig2} (Bottom) result in a {\it continuous periodic} scheme.

Speed of movement for both schemes is adjusted by varying the frequency $F$ of the function
$\tau(t)$.  Due to the difference in periods, in the discontinuous case
$\tau(t)=15(Ft-\lfloor Ft\rfloor)$, while in the continuous case  $\tau(t)=16(Ft-\lfloor Ft\rfloor)$. Thus if $F=1$, the subdomain would complete the cycle in one time unit.

\begin{figure}[ht]
\psfrag{3}{\tiny$3$}
\psfrag{4}{\tiny$4$}
\psfrag{7}{\tiny$7$}
\psfrag{8}{\tiny$8$}
\psfrag{11}{\tiny$11$}
\psfrag{12}{\tiny$12$}
\psfrag{15}{\tiny$15$}
\psfrag{6}{\tiny$6$}
\psfrag{13}{\tiny$12$}
\psfrag{10}{\tiny$10$}
\psfrag{16}{\tiny$16$}
\psfrag{9}{\tiny$9$}
\psfrag{ix}{\tiny$n_x(\tau)$}
\psfrag{iy}{\tiny$n_y(\tau)$}
\psfrag{x}{\tiny$x$}
\psfrag{y}{\tiny$y$}
\psfrag{t}{\tiny$\tau$}
\psfrag{N2}{\tiny$N/2$}
\psfrag{N4}{\tiny$N/4$}
\psfrag{N3}{\tiny$3N/4$}
 \centerline{\includegraphics[scale=.5 ]{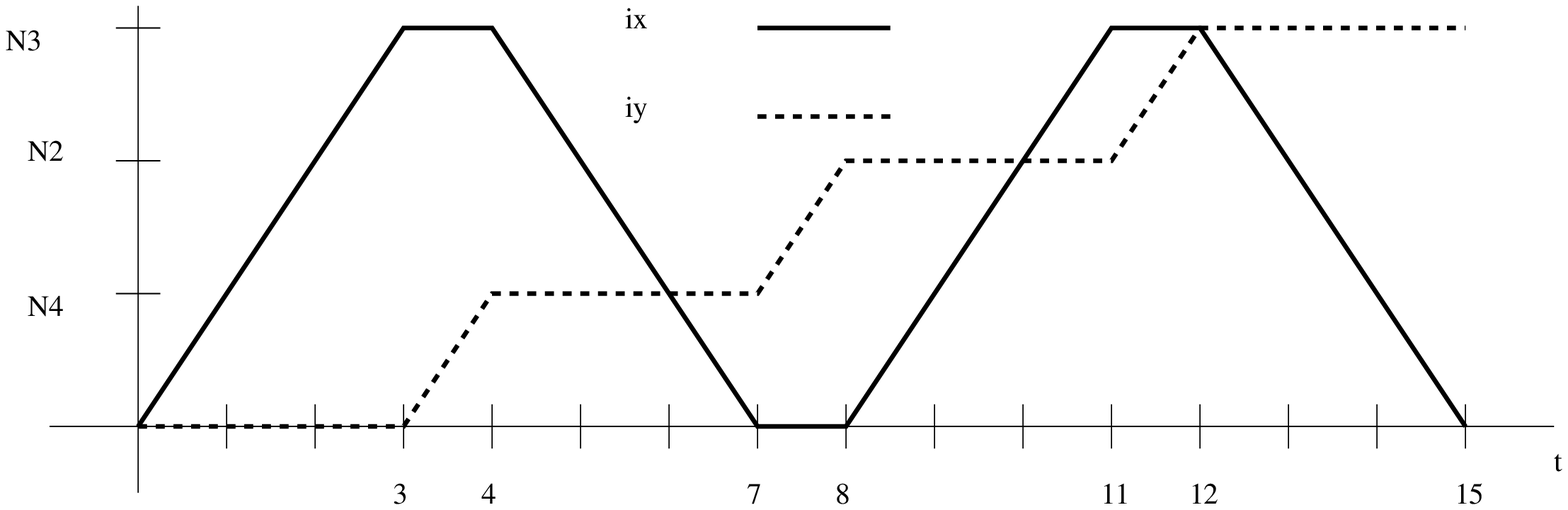}\quad \includegraphics[scale=.35 ]{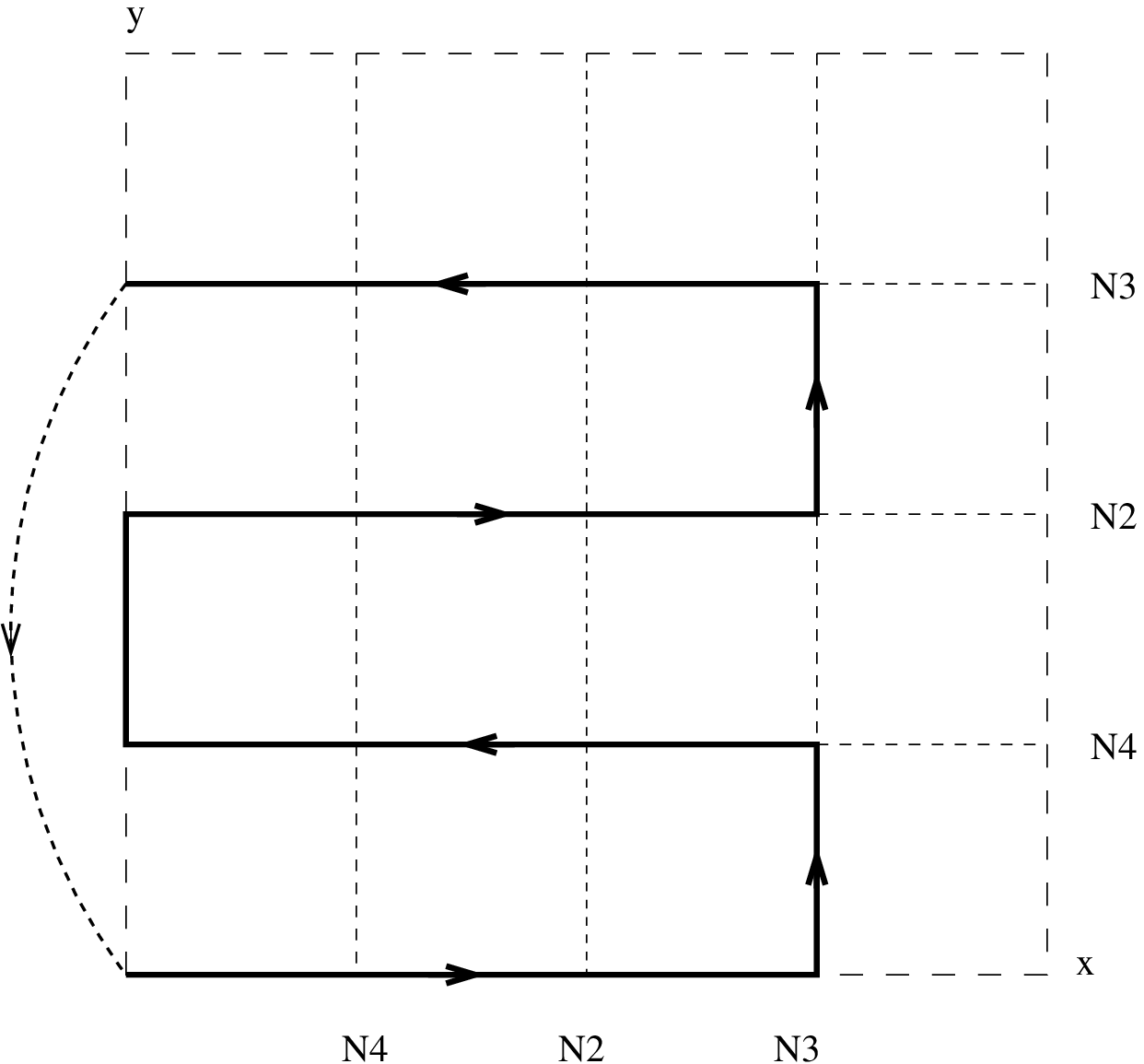} }
 \centerline{\includegraphics[scale=.5 ]{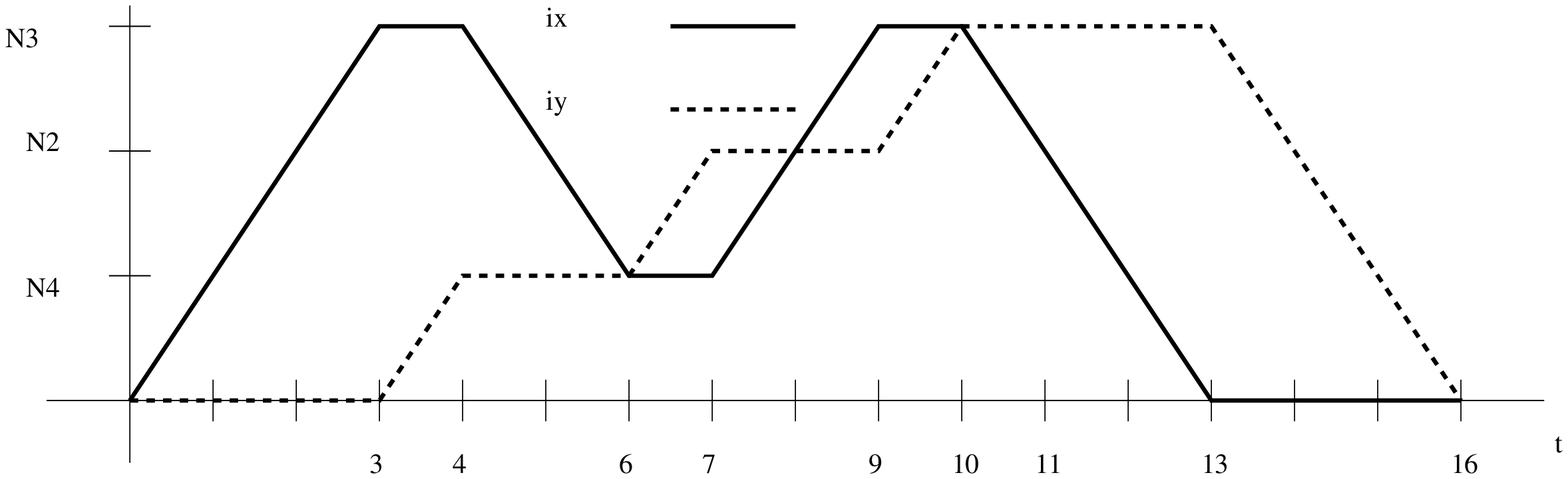}\quad \includegraphics[scale=.35 ]{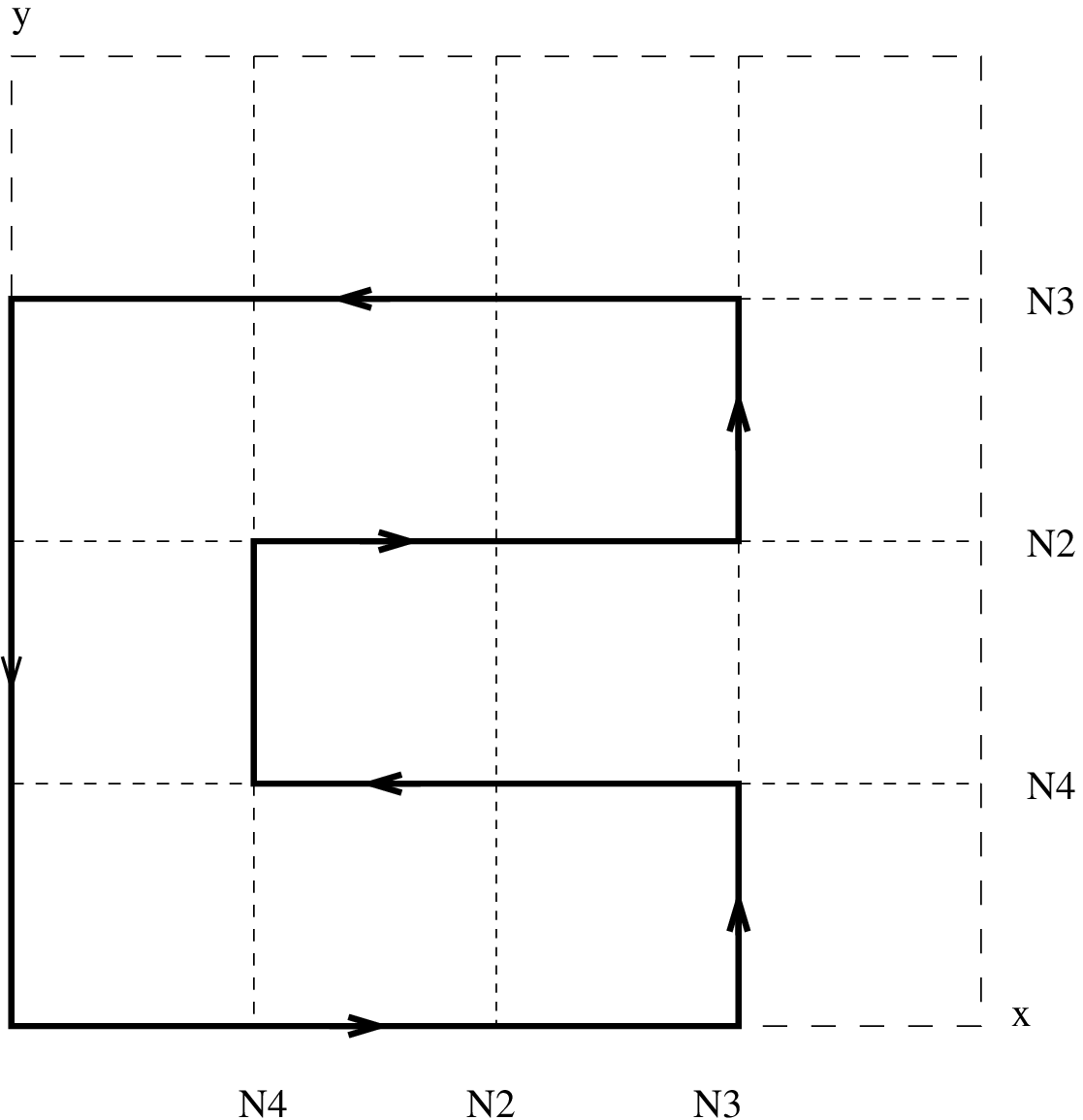} }
 \caption{Movement of lower left corner of subdomain. Top: discontinuous scheme, Bottom: continuous scheme.}
\label{cornerfig2}
\end{figure}

In Figure \ref{varyspeedfig} (Left) the relative $L^2$ error using the discontinuous periodic scheme is plotted for $F$ ranging from 0.25 to 40 (the legend from top to bottom matches the ranking of the errors at $t=100$).  In each case we use the interpolating operator $\interp_4$ over the subdomain, i.e., only every 16th nodal value in each direction.  While Figure \ref{varyspeedfig} shows nudging with the moving subdomain effectively synchronizes with most choices of $F$, there appears to be sweet spot at $F=0.5$.  

We note that nudging at frequencies $F=8,16,40$ does not result in synchronization.  This seems to be due to a resonance that causes only a small fraction of the possible subdomain positions to be visited.   This is illustrated in Figure \ref{varyspeedfig} (Right) where we plot the corners $n_x,n_y$ of the subdomains through the first two cycles for $F=40$ and $\delta t=0.001$.   This resonance can be avoided by reducing the time step size.  Figure \ref{move40fig} (Left) shows that the result for $F=40$, $\delta=0.0001$ nearly matches that for  $F=4$ and $\delta t=.001$. The eventual discrepancy between the two error plots is presumably due to increased round-off from a ten-fold increase in computational steps. 
The smaller time step prevents skipping over too many subdomains, as shown in Figure \ref{move40fig} (Right).

\begin{figure}[ht]
\psfrag{t}{\tiny$t$}
\psfrag{1over16}{\tiny$1/16$}
\psfrag{relative L2error}{\tiny $L^2$ rel. error}
\psfrag{'nxE-3' u 3:1}{\qquad\tiny$n_x$}
\psfrag{'nyE-3' u 3:2}{\qquad\tiny$n_y$}
\centerline{\includegraphics[scale=.4,angle=-90]{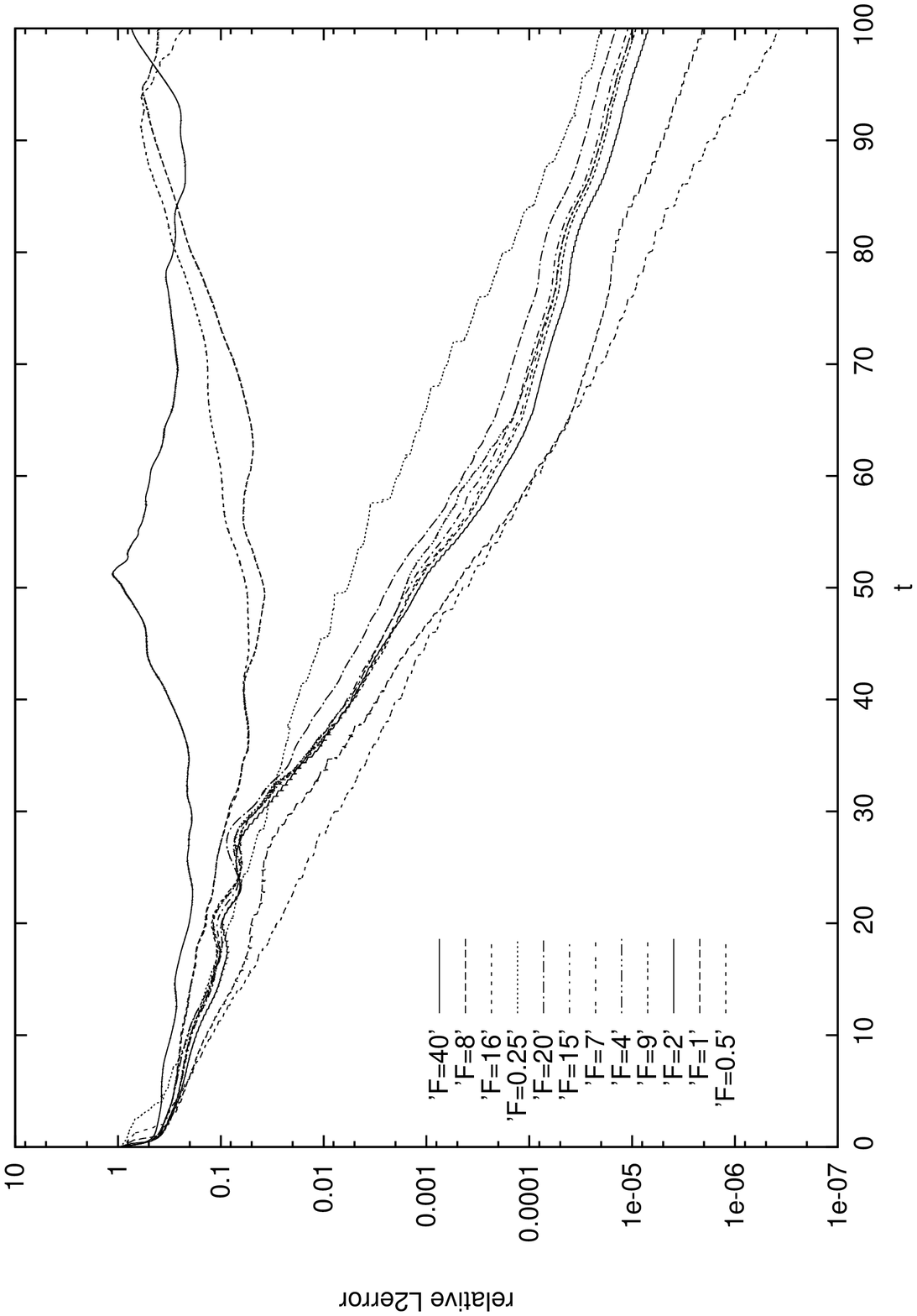} \includegraphics[scale=.4,angle=-90]{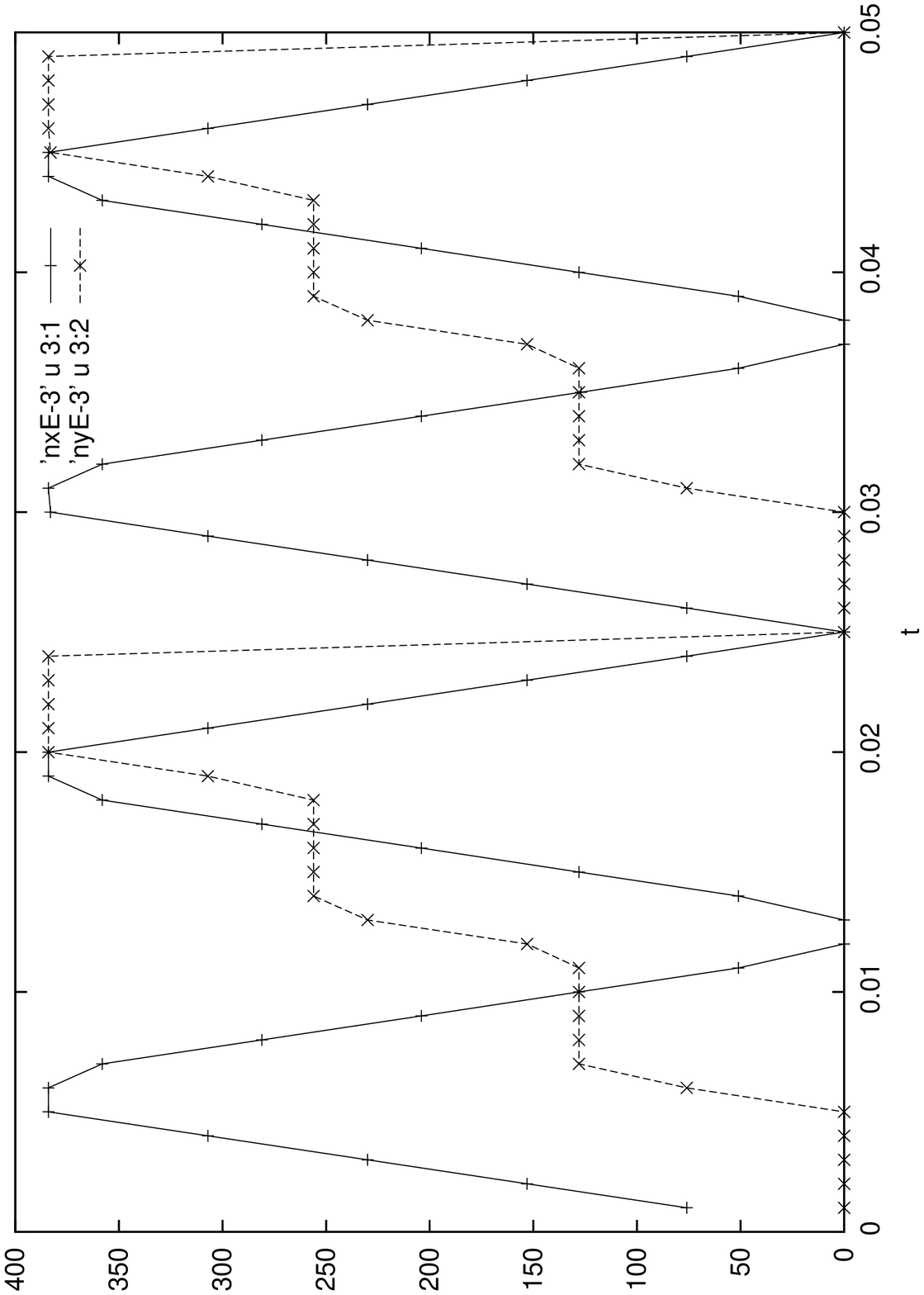}  }
\vskip .15 truein
\caption{Left: effect of frequency, $F$, $\delta t=0.001$.  Right: subdomain corners $n_x,n_y$, $F=40$, $\delta t=0.001$.  }
\label{varyspeedfig}
\end{figure}

\begin{figure}[ht]
\psfrag{t}{$t$}
\psfrag{'nxE-4' u 3:1}{\qquad\tiny$n_x$}
\psfrag{'nyE-4' u 3:2}{\qquad\tiny$n_y$}
\psfrag{relative L2error}{\tiny $L^2$ rel. error}
\centerline{
 \includegraphics[scale=.4,angle=-90]{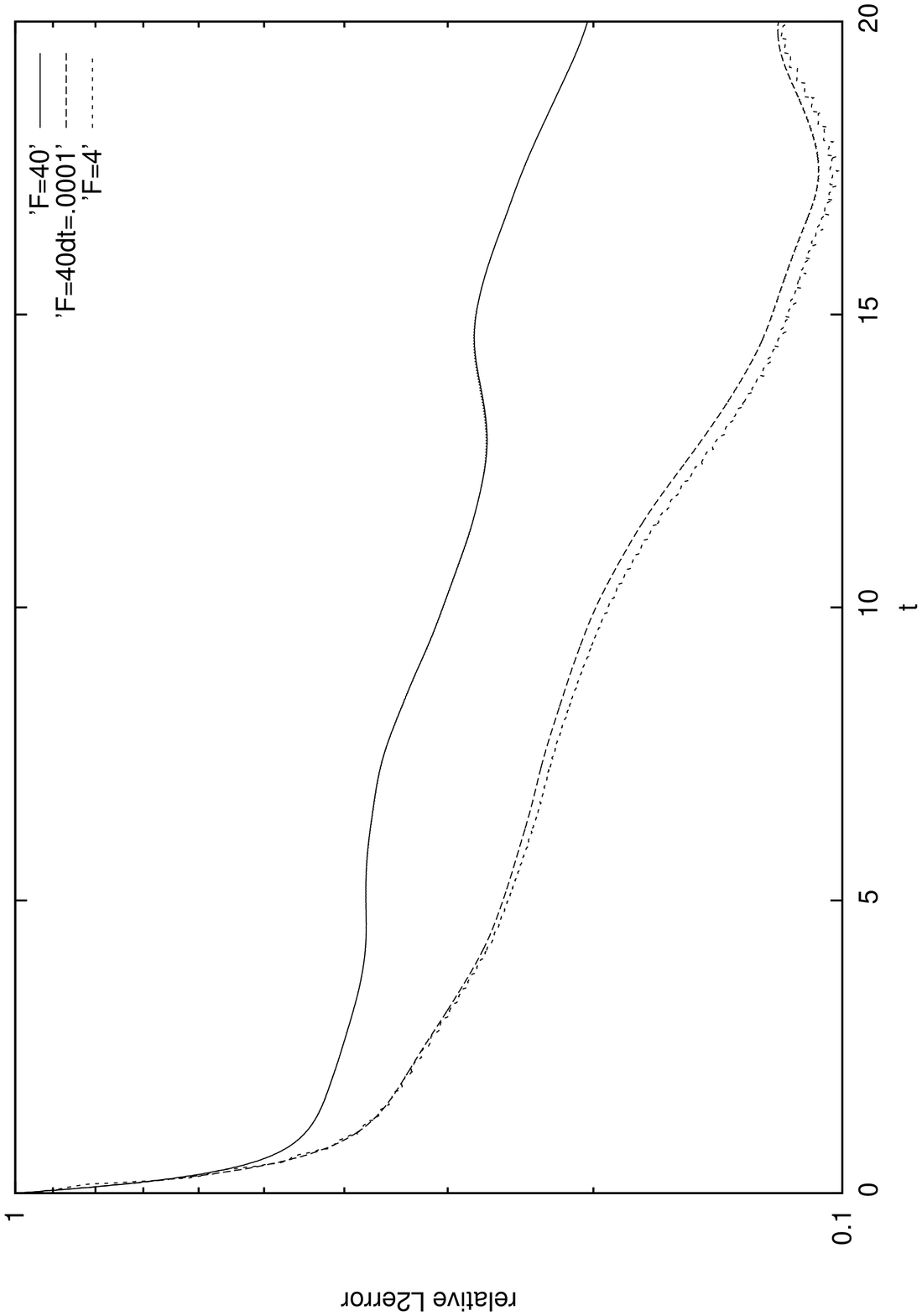}
 \includegraphics[scale=.4,angle=-90]{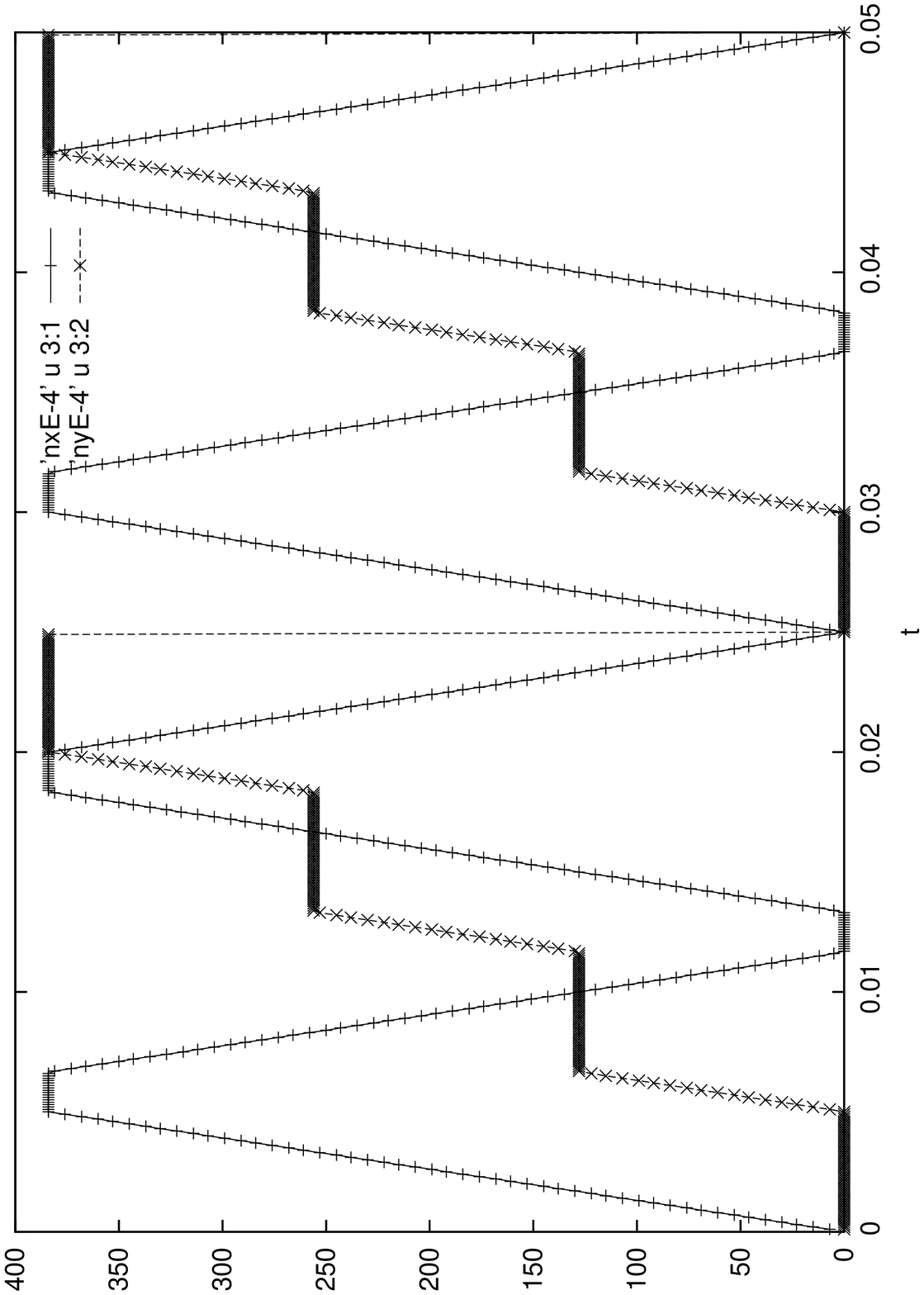} }
\vskip .15 truein
\caption{Left: $F=4$ with $\delta t=0.001$, $F=40$ with both $\delta t =0.001$ and  $\delta t =.0001$. Right: subdomain corners $n_x,n_y$, $F=40$, $\delta t=0.0001$.}
\label{move40fig}
\end{figure}

In \cite{BBJ1} we aimed to use the minimal amount of data. We compare the effect of data resolution on the continous scheme in Figure \ref{p1ctsfig} (Left).  The results for $p=1$ and $p=2$ are indistinguishable.

\begin{figure}[ht]
\psfrag{t}{$t$}
\psfrag{'../T=.02end'}{\quad\tiny{dom.}}
\psfrag{'p1ctsF1'}{\tiny $p=1$}
\psfrag{'p2ctsF1'}{\tiny $p=2$}
\psfrag{'p3ctsF1'}{\tiny $p=3$}
\psfrag{'p4ctsF1'}{\tiny $p=4$}
\psfrag{'p1cts'}{\tiny{cts}}
\psfrag{'p1dct'}{\tiny disc }
\psfrag{relative L2error}{$L^2$ rel. error}
\centerline{\includegraphics[scale=.4,angle=-90]{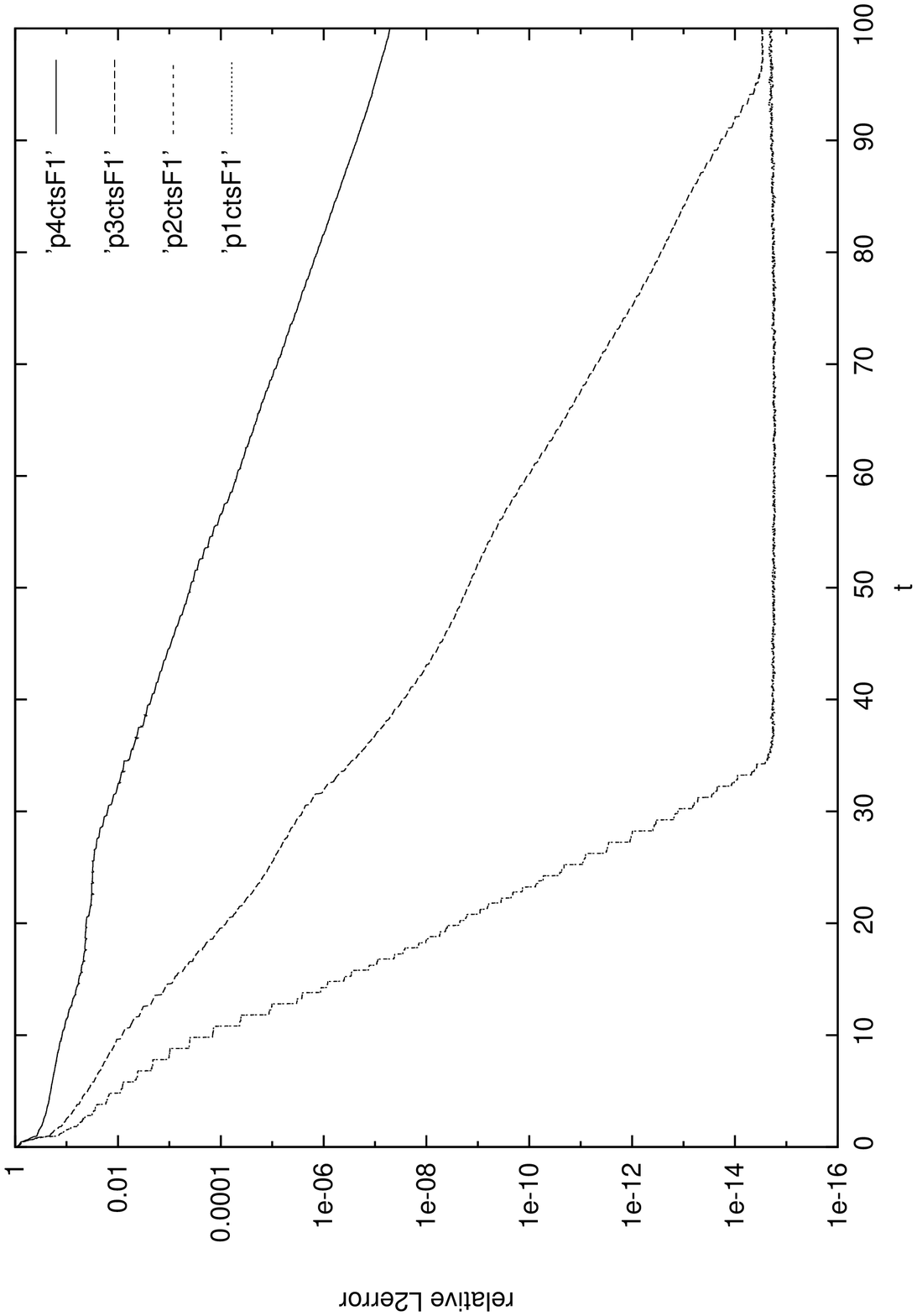}  \includegraphics[scale=.4,angle=-90]{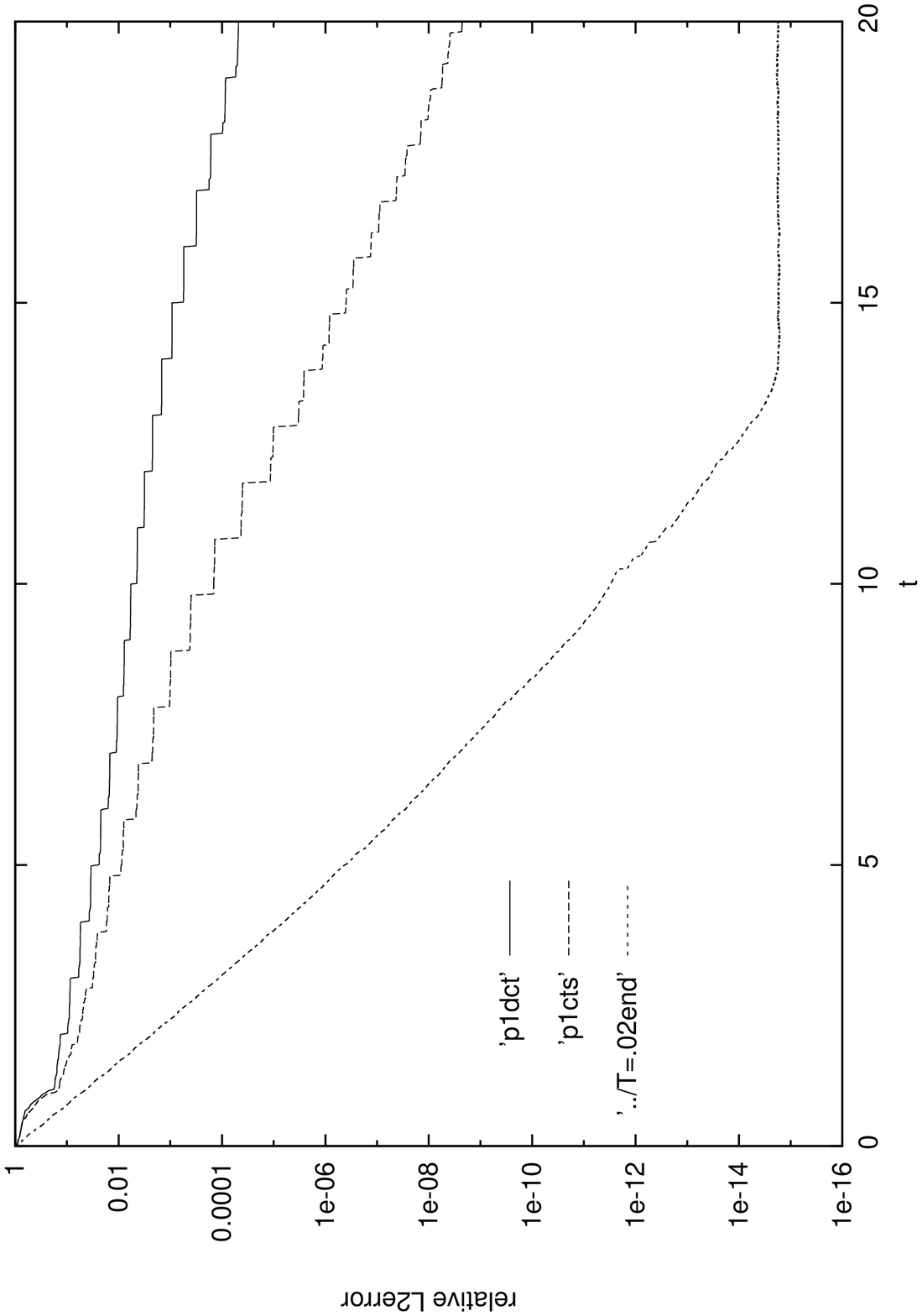} }
\vskip .15 truein
\caption{Left: effect of data resolution on continuous scheme, $F=1$. Right: continuous and discontinuous movements, including the dominant scheme with nudging period $T=.02$, all with $p=1$.}
\label{p1ctsfig}
\end{figure}

\subsection{Nudging over the dominant subregion}

Motivated by the analysis in this paper, we also test a scheme which finds among the 16 subdomains in Figure \ref{cornerfig2} (Right) the dominant one, nudges for a fixed period $T$ over that subdomain, and repeats.  The nudging on the subdomain is done using every other node in each direction, i.e., $p=1$. The dominant subdomain is determined by computing the  trapezoidal rule approximation of the integral $\int_{\Omega_j} |\omega|^2 $ for $j=1, \ldots, 16$ on a coarse grid: using every $16^{\text{th}}$ node in each direction. In Figure \ref{p1ctsfig} (Right) we compare all three movement schemes, using in each case, every other data point within the subdomain.  The scheme nudging over the dominant subdomain achieves near machine precision in about half the time it took for the continuous scheme to do so.  The relative $L^2$ errors from the reference solution are shown in Figure \ref{up100fig} (Left) for several values of $T$.  A zoom of those errors over the initial time range is shown in Figure \ref{up100fig} (Right), together with the ratios of  $R_T=\int_{\Omega_j} |\omega|^2/\int_{\Omega} |\omega|^2$ for $T=.04,.08$ and the line for the constant function $1/16$.  As expected, the error drops the fastest when when this ratio is largest. We note that the number of observers, when using the nudging period $T=.02$, $p=1$, and $\delta t=.001$, comes to $64^2+32^2/20=
4147.2$ per step.

\begin{figure}[ht]
\psfrag{t}{$t$}
\psfrag{relative L2error}{$L^2$ rel. error}
\psfrag{'T=.08end'}{\tiny$T=.08$}
\psfrag{'T=.04end'}{\tiny$T=.04$}.
\psfrag{'T=.02end'}{\tiny$T=.02$}
\psfrag{'T=.04ratio'}{\tiny$R_{.04}$}
\psfrag{'T=.08ratio'}{\tiny$R_{.08}$}
\psfrag{'1over16'}{\tiny$1/{16}$}
\centerline{\includegraphics[scale=.4,angle=-90]{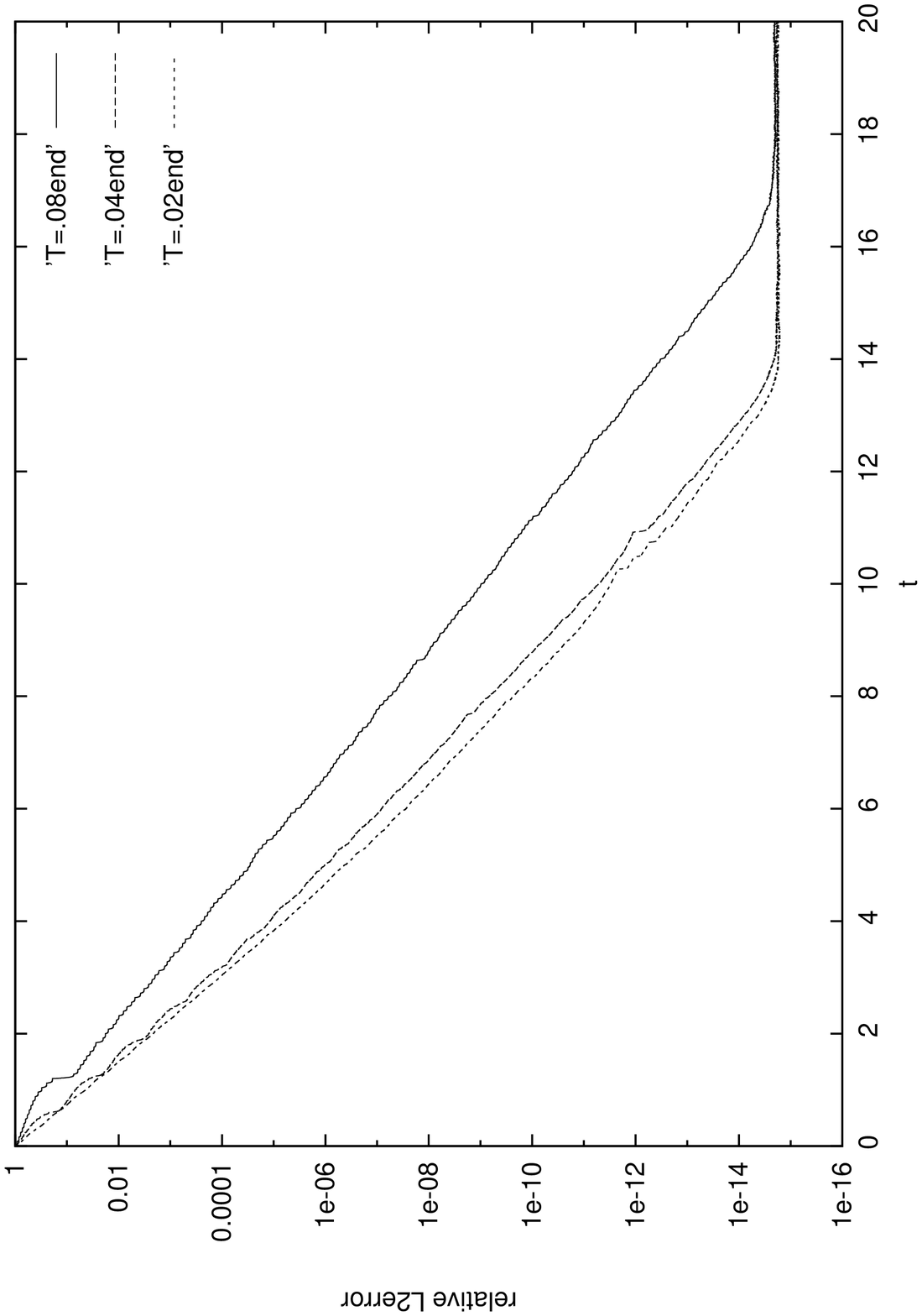}  \includegraphics[scale=.4,angle=-90]{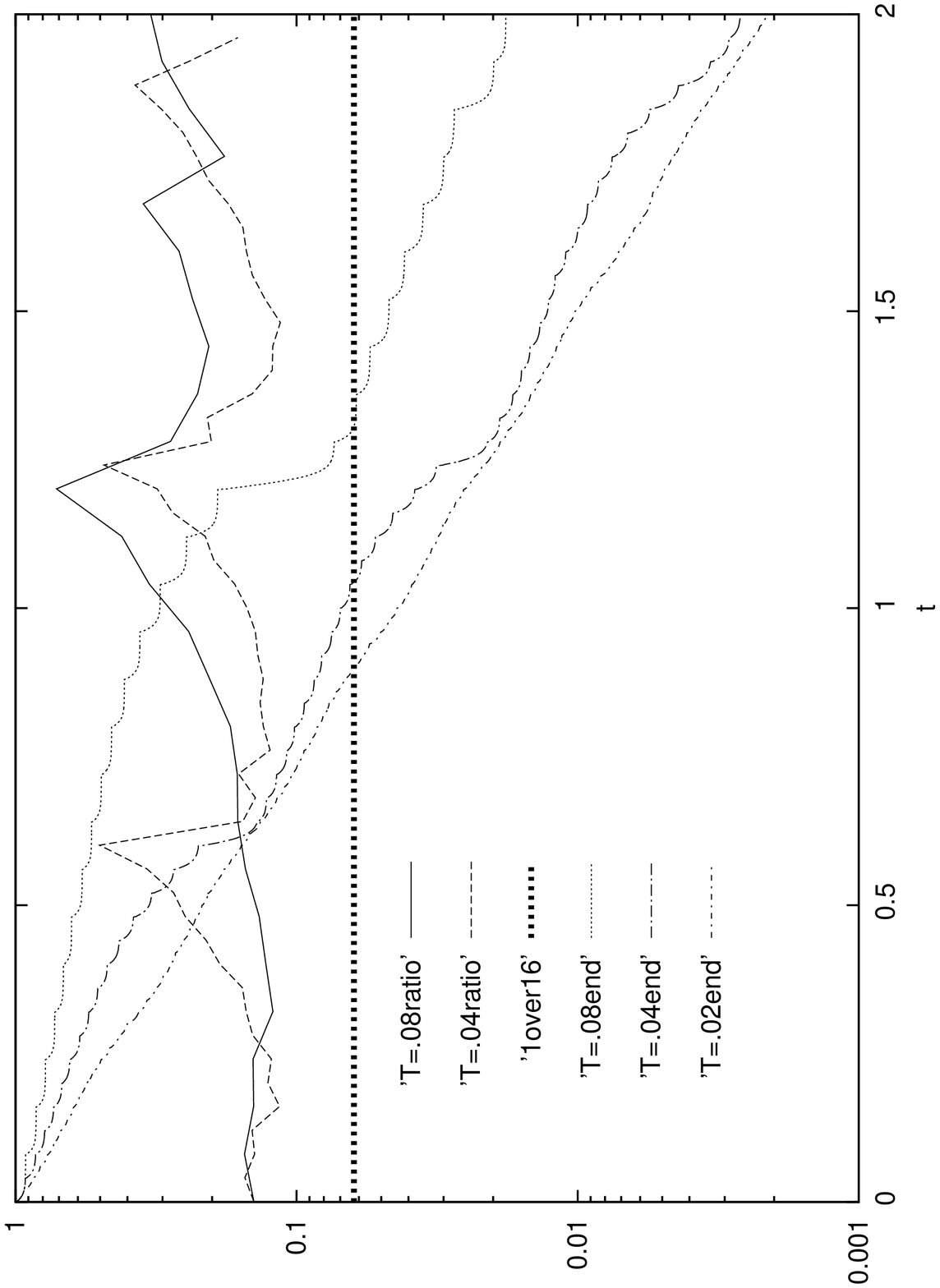} }
\vskip .15 truein
\caption{Left: effect of update period on dominant scheme. Right: zoom of errors over initial time range along with their ratios $R_T$ for $T=.04, .08$. }
\label{up100fig}
\end{figure}

One can imagine a scenario where some time would be needed to move observational equipment from one subdomain to the next. We simulate this by introducing a delay at the moment the new dominant subdomain is determined.  The delay is taken to be a significant fraction $\varphi_{\text d}$ of the nudging period: in one case, one-fourth and in another, one-half.  With the delay being at the beginning of that period, it is natural to expect a slowdown in synchronization, since the dominance of the subdomain fades.  The results for these delays are shown in Figure \ref{delayfig} for the period $T=.02$.  Also shown in Figure \ref{delayfig}  is a plot for a scheme where the next subdomain is chosen randomly, with no delay as well as with delay fractions $\varphi_{\text r}=.25, .5$ of the nudging period. Such random movement is somewhat akin to the ``bleeps" scheme in \cite{LariosMobile}, except here the observers remain fixed and uniformly distributed over a subdomain for the nudging period, rather than moving freely throughout the full domain.  
\begin{figure}[ht]
\psfrag{t}{$t$}
\psfrag{relative L2error}{$L^2$ rel. error}
\psfrag{'frac=0.500'}{\tiny $\varphi_{\text d}=.5$}
\psfrag{'random.25'}{\tiny $\varphi_{\text r}=.25$}
\psfrag{'frac=.2500'}{\tiny  $\varphi_{\text d}=.25$}
\psfrag{'random0.5'}{\tiny $\varphi_{\text r}=.5$}
\psfrag{'../T=.02end'}{\quad\tiny dom.}
\psfrag{'random000'}{\tiny random}
\centerline{\includegraphics[scale=.4,angle=-90]{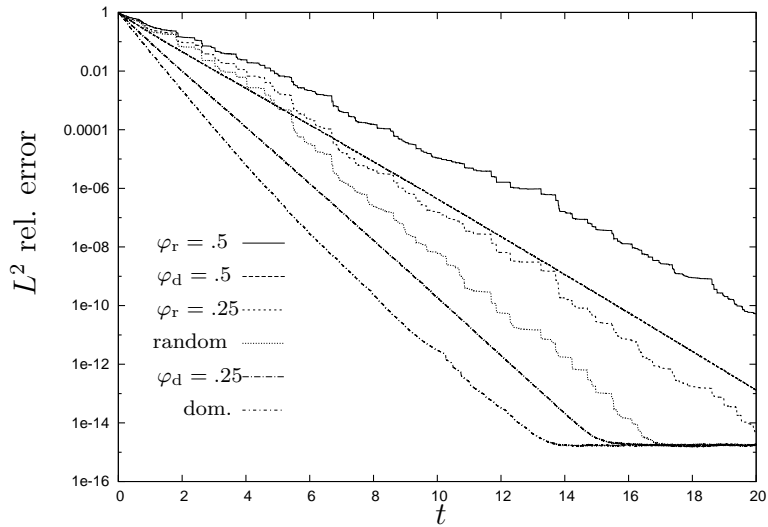} }
\vskip .15 truein
\caption{Effect of delay in nudging, $T=.02$, $\varphi_{\text d}=$ delay fraction dominant scheme, $\varphi_{\text r}=$ delay fraction random scheme. }
\label{delayfig}
\end{figure}

In our final test,  we compare to purely spectral nudging.  Recall that the data we have used to determine the dominating subregion corresponds, through an FFT, to the $32^2$ lowest Fourier modes.  Nudging at this global resolution can be simply affected by taking the interpolating operator to be projection onto those modes.  We plot the result in Figure \ref{hybridfig}, together with that for the dominant scheme for period $T=.02$ (no delay) and that for spectral nudging with $64^2=4096$ modes (nearly the amount of data, per time step, used for the dominant scheme).   We note that, initially, the error decreases faster for spectral nudging.  To take advantage of this, we consider a 
hybrid scheme which uses spectral nudging with $32^2$ modes until time $t=.1$ and then switches to the dominant scheme.  This results in faster synchronization to machine precision, though it used knowledge of an optimal time to switch. Such a hybrid scheme could be made practical by monitoring the error as Fourier projection is used and switching when a significant slowdown is detected.    

\begin{figure}[ht]
\psfrag{t}{$t$}
\psfrag{'32sqmodes'}{\tiny$32^2$ modes}
\psfrag{'64sqmodes'}{\tiny$64^2$ modes}
\psfrag{'hybrid'}{\tiny hybrid}
\psfrag{'T=.02end'}{\tiny dom.}
\psfrag{relative L2error}{$L^2$ rel. error}
\centerline{\includegraphics[scale=.4,angle=-90]{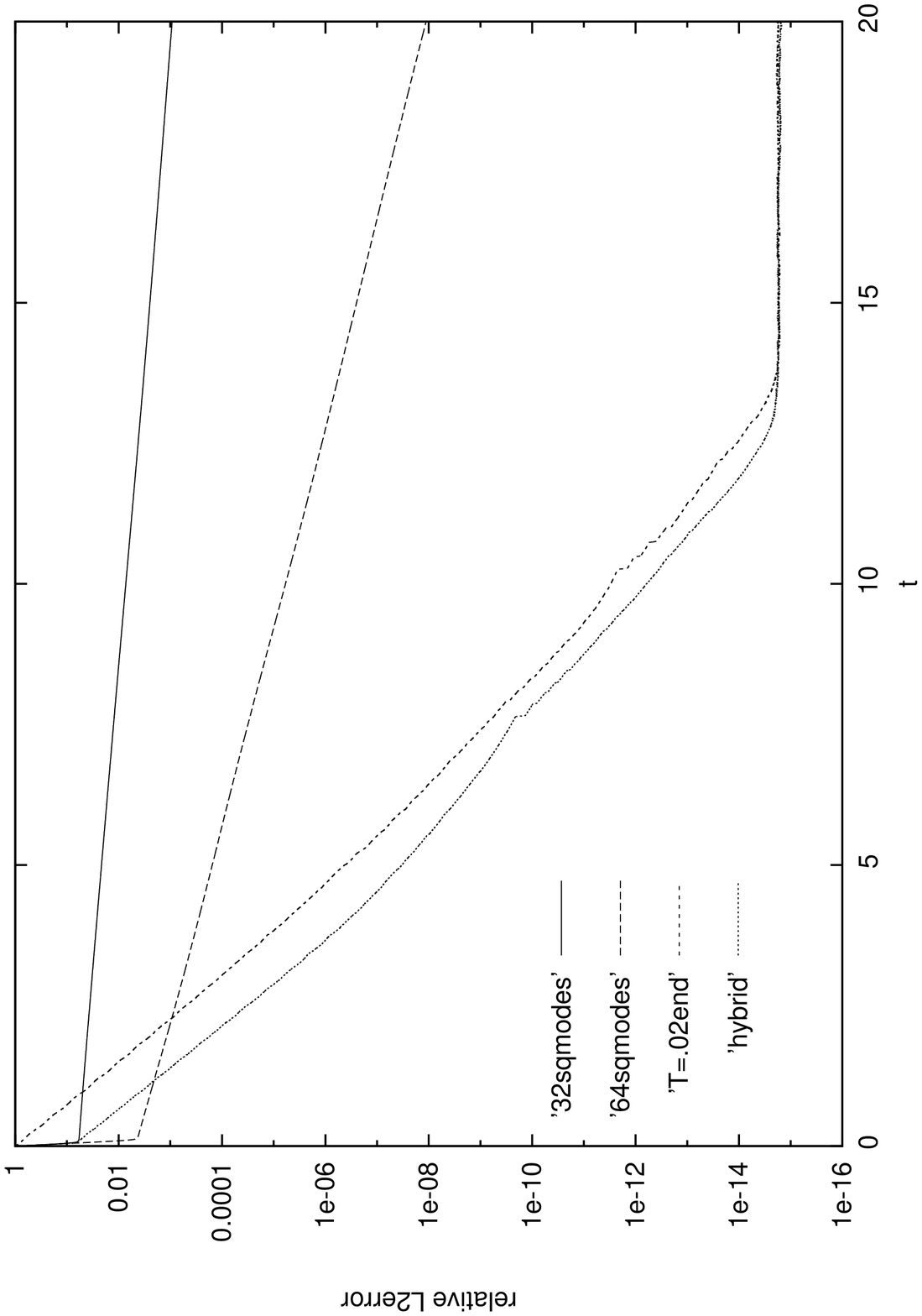}\includegraphics[scale=.4,angle=-90]{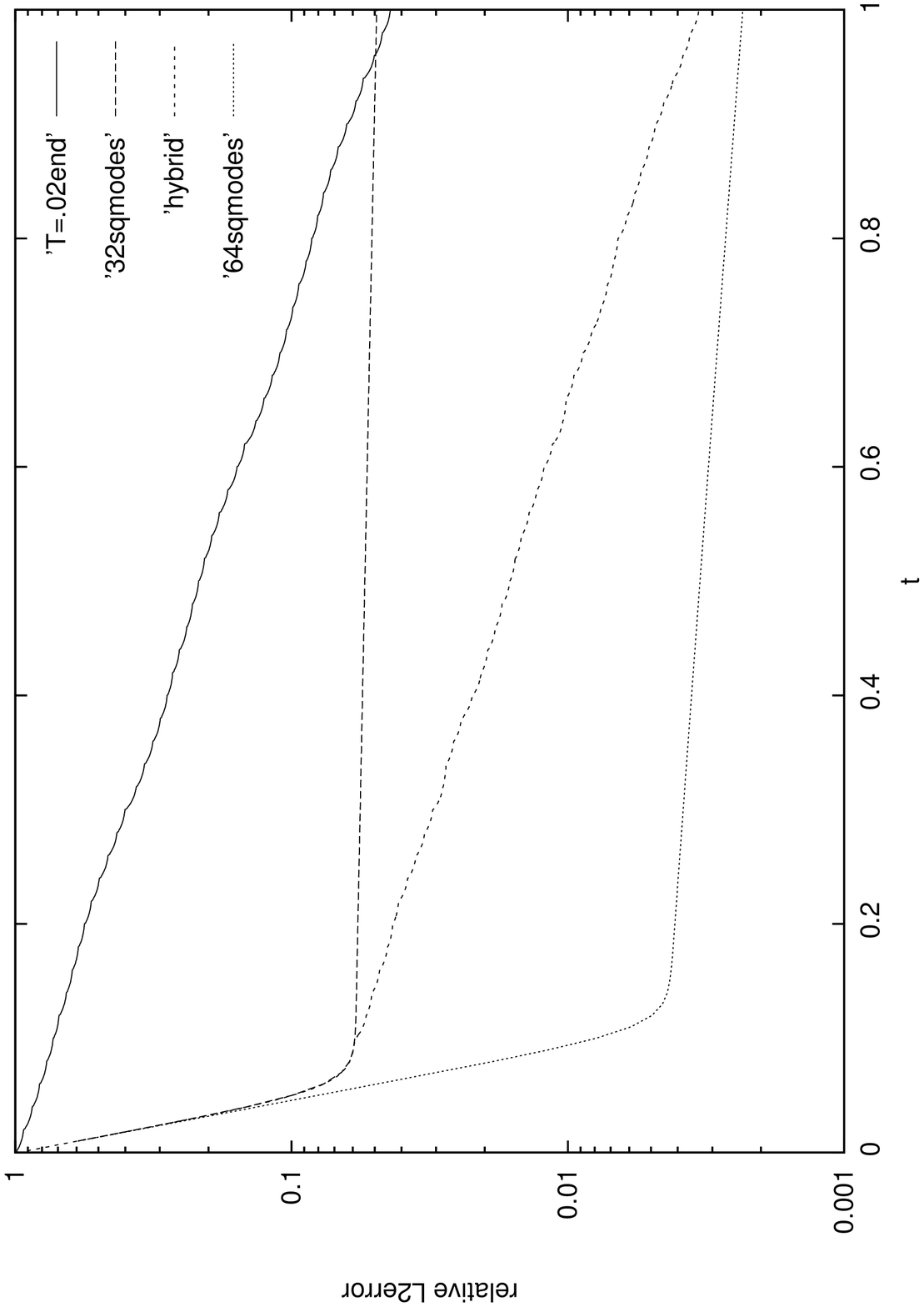}}
\vskip .15 truein
\caption{Comparing spectral nudging with dominant scheme and a hybrid scheme, $T=.02$, switch at $t=.1$, all done with $\delta t = 0.001$.}
\label{hybridfig}
\end{figure}

\subsection{Summary of numerical results}

We have demonstrated that when the nudging subdomain moves in a regular pattern to cover the full domain, generally, the faster it moves, the faster the synchronization. Refining the data mesh over the subdomain from every 16th point down to every 4th point, in each direction, yields synchronization to machine precision in less than 35 time units. Nudging with every other point in each direction does not change the convergence for regular continuous movement. Nudging at that same resolution over the subdomain with the dominant $L^2$ error achieves machine precision in about half the time.  To be fair, this dominant scheme requires an additional coarse mesh of observed data in order to determine the subdomain. This, however, is done only at the end of each nudging period. We have found a near optimal time period, over which to do the nudging before the determination of the next dominant subdomain. This dominant scheme synchronizes somewhat faster than nudging for the same period with a subdomain that is randomly selected. In the case of the random scheme, over certain time intervals the error is nearly constant, punctuated by sharp drops when, presumably, the subdomain is dominant, or nearly so.  These sharp declines are also to be expected since over the prior interval the error has held roughly steady, making the beneficial feedback stronger.  Both the dominant and random scheme appear to be robust against a significant delay before starting to nudge after each new subdomain is determined.  Finally, we have found that while the dominant scheme achieves much faster synchronization with a comparable number of data observations (per time step) than using global spectral nudging, the latter produces a sharper initial drop.  We have demonstrated the effectiveness of a simple hybrid scheme that switches from the spectral scheme to dominant scheme, after this initial drop.

 \section*{Acknowledgements} 
 Bradshaw acknowledges support from the Simons Foundation via a Collaboration Grant (635438). The authors all acknowledge the Lilly Endowment, Inc., through its support for the Indiana University Pervasive Technology Institute, which provided supercomputing resources used for this research \cite{IUcomp}.

\end{document}